\newtheorem{thm}{Theorem}[section]
\newtheorem{cor}[thm]{Corollary}
\newtheorem{defn}[thm]{Definition}
\numberwithin{equation}{section}
\begin{document}

\title{\bf  $SL(2,{\bf Z})$ modular forms and anomaly cancellation formulas}
\author{Yong Wang* \ \ Jianyun Guan}

\thanks{{\scriptsize
\hskip -0.4 true cm \textit{2010 Mathematics Subject Classification:}
58C20; 57R20; 53C80.
\newline \textit{Key words and phrases:} $SL(2,{\bf Z})$ modular forms; anomaly cancellation formulas; divisibility of index
\newline \textit{* Corresponding author.}}}

\maketitle

\begin{abstract}
 By some $SL(2,{\bf Z})$ modular forms introduced in \cite{Li2} and \cite{CHZ} , we get some interesting
anomaly cancellation formulas. As corollaries, we get some divisibility results of index of twisted Dirac operators.
\end{abstract}

\vskip 0.2 true cm


\pagestyle{myheadings}
\markboth{\rightline {\scriptsize Yong Wang}}
         {\leftline{\scriptsize $SL(2,{\bf Z})$ modular forms and anomaly cancellation formulas}}

\bigskip
\bigskip


\section{ Introduction}
\quad In 1983, the physicists Alvarez-Gaum\'{e} and Witten \cite{AW}
  discovered the "miraculous cancellation" formula for gravitational
  anomaly which reveals a beautiful relation between the top
  components of the Hirzebruch $\widehat{L}$-form and
  $\widehat{A}$-form of a $12$-dimensional smooth Riemannian
  manifold. Kefeng Liu \cite{Li1} established higher dimensional "miraculous cancellation"
  formulas for $(8k+4)$-dimensional Riemannian manifolds by
  developing modular invariance properties of characteristic forms.
  These formulas could be used to deduce some divisibility results. In
  \cite{HZ1}, \cite{HZ2}, \cite{CH}, some more general cancellation formulas that involve a
  complex line bundle and their applications were established. In \cite{HLZ1}, using the Eisenstein series, a more general cancellation
  formula was derived.  In \cite{HLZ2}, Han, Liu and Zhang showed that both of the Green-Schwarz anomaly factorization formula
for the gauge group $E_8\times E_8$ and the Horava-Witten anomaly factorization formula for the gauge
group $E_8$ could be derived through modular forms of weight $14$. This answered a question of J.
H. Schwarz. They also established generalizations of these factorization formulas and obtaind a new
Horava-Witten type factorization formula on $12$-dimensional manifolds. In \cite{HHLZ}, Han, Huang, Liu and Zhang introduced a modular form of weight $14$ over $SL(2,{\bf Z})$ and a modular form of weight $10$ over $SL(2,{\bf Z})$ and they got some interesting
anomaly cancellation formulas on $12$-dimensional manifolds. In \cite{Li2}, a modular form of weight $2k$ for a $2k$-dimensional spin manifold was introduced.
  In \cite{CHZ}, Chen, Han and Zhang defined an integral modular form of weight $2k$ for a $4k$-dimensional $spin^c$ manifold and an integral modular form of weight $2k$ for a $4k+2$-dimensional $spin^c$ manifold. {\bf Our motivation is to prove some more anomaly cancellation formulas on some dimensional manifolds by modular forms over $SL(2,{\bf Z})$ introduced in \cite{Li2} and \cite{CHZ}.}\\
\indent Let $X$ be a spin manifold.
 Let $\nabla^{ TX}$ be the associated Levi-Civita connection on $TX$
 and $R^{TX}=(\nabla^{TX})^2$ be the curvature of $\nabla^{ TX}$.
 Let $\widehat{A}(TX,\nabla^{ TX})$
 be the Hirzebruch characteristic forms defined  by (cf.
 \cite{Zh})
 $$\widehat{A}(TX,\nabla^{ TX})={\rm
 det}^{\frac{1}{2}}\left(\frac{\frac{\sqrt{-1}}{4\pi}R^{TX}}{{\rm
 sinh}(\frac{\sqrt{-1}}{4\pi}R^{TX})}\right).$$
Let $\triangle(X)$ be the spinor bundle and $\widetilde{TX}=TX-{\rm dim}X$. We have
\begin{thm}
When ${\rm dim}X=8$, we have
\begin{align}
&\left\{\widehat{A}(TX){\rm ch}(\triangle(X)){\rm ch}(2\widetilde{T_CX})+32\widehat{A}(TX){\rm ch}(\widetilde{T_CX}+\wedge^2\widetilde{T_CX})\right\}^{(8)}\\\notag
&=240\left\{\widehat{A}(TX){\rm ch}(\triangle(X))+32\widehat{A}(TX)\right\}^{(8)},
\end{align}
\begin{align}
&\left\{\widehat{A}(TX){\rm ch}(\triangle(X)){\rm ch}(2\widetilde{T_CX}+\wedge^2\widetilde{T_CX}+\widetilde{T_CX}\otimes\widetilde{T_CX}
+S^2\widetilde{T_CX})\right.\\\notag
&\left.+32\widehat{A}(TX){\rm ch}(\wedge^4\widetilde{T_CX}+\wedge^2\widetilde{T_CX}\otimes\widetilde{T_CX}+\widetilde{T_CX}
\otimes\widetilde{T_CX})
+S^2\widetilde{T_CX}+\widetilde{T_CX}\right\}^{(8)}\\\notag
&=2160\left\{\widehat{A}(TX){\rm ch}(\triangle(X))+32\widehat{A}(TX)\right\}^{(8)}.
\end{align}
\end{thm}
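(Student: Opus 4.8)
The plan is to read off both identities as the $q^{1}$- and $q^{2}$-coefficient comparisons for a single modular form of weight $4$ over $SL(2,{\bf Z})$, in the spirit of \cite{Li2} and \cite{CHZ}. Writing $S_{t}(V)=\sum_{n\ge 0}S^{n}(V)t^{n}$ and $\wedge_{t}(V)=\sum_{n\ge 0}\wedge^{n}(V)t^{n}$ for the symmetric and exterior power operations, set
$$\Theta_{A}=\bigotimes_{n=1}^{\infty}S_{q^{n}}(\widetilde{T_CX})\otimes\bigotimes_{m=1}^{\infty}\wedge_{q^{m}}(\widetilde{T_CX}),$$
and let $\Theta_{B}$ be the integral-power part (in $q$) of $\bigotimes_{n=1}^{\infty}S_{q^{n}}(\widetilde{T_CX})\otimes\bigotimes_{m=1}^{\infty}\wedge_{q^{m-1/2}}(\widetilde{T_CX})$, i.e. the $\theta_{2}$--$\theta_{3}$ symmetrization that discards the half-integral powers. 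Consider
$$P(\tau)=\Big\{\widehat{A}(TX){\rm ch}(\triangle(X)){\rm ch}(\Theta_{A})+32\,\widehat{A}(TX){\rm ch}(\Theta_{B})\Big\}^{(8)},$$
the degree-$8$ component of the characteristic form whose $SL(2,{\bf Z})$-modularity is supplied by \cite{Li2} and \cite{CHZ}.

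Next I would compute the low-order $q$-coefficients of $P(\tau)$. At order $q^{0}$ both $\Theta_{A}$ and $\Theta_{B}$ reduce to the trivial line bundle, so the constant term is exactly $B:=\{\widehat{A}(TX){\rm ch}(\triangle(X))+32\widehat{A}(TX)\}^{(8)}$, the common right-hand side. Expanding $\Theta_{A}$ gives ${\rm ch}(2\widetilde{T_CX})$ at order $q$ and $2\widetilde{T_CX}+\wedge^{2}\widetilde{T_CX}+\widetilde{T_CX}\otimes\widetilde{T_CX}+S^{2}\widetilde{T_CX}$ at order $q^{2}$, the two copies of $\widetilde{T_CX}$ at order $q$ coming from $S_{q}$ and $\wedge_{q}$ and the extra $2\widetilde{T_CX}$ at order $q^{2}$ from $S_{q^{2}}$ and $\wedge_{q^{2}}$. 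The analogous expansion of $\Theta_{B}$ gives $\widetilde{T_CX}+\wedge^{2}\widetilde{T_CX}$ at order $q$ and $\wedge^{4}\widetilde{T_CX}+\wedge^{2}\widetilde{T_CX}\otimes\widetilde{T_CX}+\widetilde{T_CX}\otimes\widetilde{T_CX}+S^{2}\widetilde{T_CX}+\widetilde{T_CX}$ at order $q^{2}$. Hence the $q^{1}$- and $q^{2}$-coefficients of $P(\tau)$ are exactly the two left-hand sides in the statement.

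Finally I would invoke modularity together with a dimension count. By \cite{Li2} and \cite{CHZ}, $P(\tau)$ is a modular form of weight $4$ over the full modular group $SL(2,{\bf Z})$. Since $M_{4}(SL(2,{\bf Z}))$ is one-dimensional and spanned by the Eisenstein series $E_{4}$, the form-valued series $P(\tau)$ must equal $\lambda\,E_{4}(\tau)$, where $\lambda$ is the $\tau$-independent degree-$8$ form given by its constant term; comparing constant terms forces $\lambda=B$. Using $E_{4}(\tau)=1+240q+2160q^{2}+\cdots$ (recall $\sigma_{3}(2)=1+2^{3}=9$, so the $q^{2}$-coefficient is $240\cdot 9=2160$) and matching the $q^{1}$- and $q^{2}$-coefficients of $P(\tau)=B\,E_{4}(\tau)$ yields the two asserted formulas.

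The main obstacle is the $SL(2,{\bf Z})$-modularity of the full combination. The piece built from ${\rm ch}(\triangle(X)){\rm ch}(\Theta_{A})$ and the piece built from ${\rm ch}(\Theta_{B})$ are individually modular only over the congruence subgroups $\Gamma_{0}(2)$ and $\Gamma^{0}(2)$; it is the precise pairing, with the weight $32$ dictated by the values $\theta_{2}(0,\tau)$ and $\theta_{3}(0,\tau)$ in dimension $8$, that cancels the half-integral $q$-powers and restores invariance under $S:\tau\mapsto-1/\tau$, landing $P$ in $M_{4}(SL(2,{\bf Z}))$. Once this modularity is established, checking that the $q$-coefficients equal the stated symmetric, exterior, and tensor power combinations is careful but routine bookkeeping.
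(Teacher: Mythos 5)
Your proposal is correct and follows essentially the same route as the paper: both form the combination $\widehat{A}\,{\rm ch}(\triangle(X)){\rm ch}(\Theta_1)+2^{2k}\widehat{A}\,{\rm ch}(\Theta_2+\Theta_3)$ (your $32\,{\rm ch}(\Theta_B)$ is exactly $16\,{\rm ch}(\Theta_2+\Theta_3)$), invoke its weight-$4$ $SL(2,{\bf Z})$-modularity from \cite{Li2}, conclude $Q=\lambda E_4$ with $\lambda$ the constant term by one-dimensionality of the space of weight-$4$ forms, and match the $q$- and $q^2$-coefficients against $E_4=1+240q+2160q^2+\cdots$. The $q$-expansions of $\Theta_1$ and of the integral part of $\Theta_2,\Theta_3$ that you record agree with the paper's equations (2.24)--(2.27), so no gap remains.
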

\begin{cor}
Lex $X$ be an $8$-dimensional spin manifold without boundary, then
\begin{equation}
{\rm Ind}((D \otimes \triangle(X) \otimes \widetilde{T_CX})_+)\equiv 0 ~~{\rm mod} ~~8Z,
\end{equation}
\begin{equation}
 {\rm Ind}((D \otimes \triangle(X) \otimes (2\widetilde{T_CX}+\wedge^2\widetilde{T_CX}+\widetilde{T_CX}\otimes\widetilde{T_CX}
+S^2\widetilde{T_CX}))_+)\equiv 0 ~~{\rm mod} ~~16Z.
\end{equation}
\end{cor}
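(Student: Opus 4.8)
The plan is to integrate the two degree-$8$ identities of Theorem 1.1 over the closed spin manifold $X$ and then invoke the Atiyah--Singer index theorem together with elementary divisibility of the numerical coefficients. Recall that for any (virtual) complex vector bundle $E$ over $X$ one has $\int_X\widehat{A}(TX){\rm ch}(E)={\rm Ind}((D\otimes E)_+)\in{\bf Z}$, and that ${\rm ch}$ is additive, ${\rm ch}(E+F)={\rm ch}(E)+{\rm ch}(F)$, so that the integral of each summand appearing in Theorem 1.1 is an \emph{integer} index. The whole argument then reduces to rearranging an identity among integers.

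First I would integrate the identity of Theorem 1.1 carrying the coefficient $240$. The two terms on the left integrate to $2\,{\rm Ind}((D\otimes\triangle(X)\otimes\widetilde{T_CX})_+)$, using ${\rm ch}(2\widetilde{T_CX})=2\,{\rm ch}(\widetilde{T_CX})$, and to $32\,{\rm Ind}((D\otimes(\widetilde{T_CX}+\wedge^2\widetilde{T_CX}))_+)$, while the right integrates to $240\big[{\rm Ind}((D\otimes\triangle(X))_+)+32\,{\rm Ind}(D_+)\big]$. Writing $K$ for the bracketed integer on the right, I obtain $2\,{\rm Ind}((D\otimes\triangle(X)\otimes\widetilde{T_CX})_+)=240K-32\,{\rm Ind}((D\otimes(\widetilde{T_CX}+\wedge^2\widetilde{T_CX}))_+)$. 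Since $240=16\cdot15$ and $32=16\cdot2$, the right side is divisible by $16$, hence
\begin{equation*}
{\rm Ind}((D\otimes\triangle(X)\otimes\widetilde{T_CX})_+)=8\big(15K-2\,{\rm Ind}((D\otimes(\widetilde{T_CX}+\wedge^2\widetilde{T_CX}))_+)\big),
\end{equation*}
which is exactly the first congruence $\bmod\ 8{\bf Z}$.

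The second congruence follows in the same way from the identity of Theorem 1.1 carrying the coefficient $2160$. Integrating, the first group on the left gives $I_2:={\rm Ind}((D\otimes\triangle(X)\otimes(2\widetilde{T_CX}+\wedge^2\widetilde{T_CX}+\widetilde{T_CX}\otimes\widetilde{T_CX}+S^2\widetilde{T_CX}))_+)$, the second group (carrying the factor $32$) integrates to $32J_2$ for some integer $J_2$, and the right gives $2160K$ with the same $K$ as above. Thus $I_2=2160K-32J_2=16(135K-2J_2)$, since $2160=16\cdot135$, giving $I_2\equiv0\ \bmod\ 16{\bf Z}$.

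The computation itself is routine; the one point that genuinely needs care---and which I regard as the main (if modest) obstacle---is the \emph{integrality} step. I must make sure that every bundle appearing as a twist, although it is a virtual bundle assembled from the reduced bundle $\widetilde{T_CX}$ and its exterior, symmetric and tensor powers together with the spinor bundle $\triangle(X)$, is a genuine element of $K$-theory over the spin manifold $X$, so that Atiyah--Singer legitimately identifies each integral $\int_X\widehat{A}(TX){\rm ch}(\cdot)$ with an integer index. Once this is in place the divisibility is forced purely by the arithmetic $16\mid240$, $16\mid32$, $16\mid2160$, together with the extra factor $2$ available in the first identity.
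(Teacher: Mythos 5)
Your proposal is correct and follows exactly the route the paper intends (the paper leaves the corollary's derivation implicit): integrate the two degree-$8$ identities of Theorem 1.1, identify each term as an integer index via Atiyah--Singer, and extract the congruences from $2I_1=240K-32J_1=16(15K-2J_1)$ and $I_2=2160K-32J_2=16(135K-2J_2)$. The integrality point you flag is indeed the only thing to check, and it holds since $\widetilde{T_CX}$ and its exterior, symmetric and tensor powers are genuine elements of $K(X)$ and $\triangle(X)$ exists globally on a spin manifold.
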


\begin{thm}
When ${\rm dim}X=12$, we have
\begin{align}
&\left\{\widehat{A}(TX){\rm ch}(\triangle(X)){\rm ch}(2\widetilde{T_CX})+128\widehat{A}(TX){\rm ch}(\widetilde{T_CX}+\wedge^2\widetilde{T_CX})\right\}^{(12)}\\\notag
&=-504\left\{\widehat{A}(TX){\rm ch}(\triangle(X))+128\widehat{A}(TX)\right\}^{(12)},
\end{align}
\begin{align}
&\left\{\widehat{A}(TX){\rm ch}(\triangle(X)){\rm ch}(2\widetilde{T_CX}+\wedge^2\widetilde{T_CX}+\widetilde{T_CX}\otimes\widetilde{T_CX}
+S^2\widetilde{T_CX})\right.\\\notag
&\left.+128\widehat{A}(TX){\rm ch}(\wedge^4\widetilde{T_CX}+\wedge^2\widetilde{T_CX}\otimes\widetilde{T_CX}+\widetilde{T_CX}
\otimes\widetilde{T_CX})
+S^2\widetilde{T_CX}+\widetilde{T_CX}\right\}^{(12)}\\\notag
&=-16632\left\{\widehat{A}(TX){\rm ch}(\triangle(X))+128\widehat{A}(TX)\right\}^{(12)}.
\end{align}
\end{thm}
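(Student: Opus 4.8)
The plan is to realize both displayed identities as the coefficients of $q$ and $q^2$ in the $q$-expansion of a single weight-$6$ modular form over $SL(2,{\bf Z})$, in exact parallel with the weight-$4$ argument behind Theorem 1.1. Following \cite{Li2} and \cite{CHZ}, I attach to the $12$-dimensional spin manifold $X$ the two virtual bundles
$$\Theta_1(T_CX)=\bigotimes_{n=1}^\infty S_{q^n}(\widetilde{T_CX})\otimes\bigotimes_{m=1}^\infty\wedge_{q^m}(\widetilde{T_CX}),$$
$$\Theta_2(T_CX)=\bigotimes_{n=1}^\infty S_{q^n}(\widetilde{T_CX})\otimes\left[\bigotimes_{m=1}^\infty\wedge_{q^{m-1/2}}(\widetilde{T_CX})\right]_{\bf Z},$$
where $[\ \cdot\ ]_{\bf Z}$ denotes the part carrying integral powers of $q$, and I form
$$Q(\tau)=\left\{\widehat{A}(TX)\,{\rm ch}(\triangle(X))\,{\rm ch}(\Theta_1(T_CX))+128\,\widehat{A}(TX)\,{\rm ch}(\Theta_2(T_CX))\right\}^{(12)}.$$
The input from the cited papers is that these two pieces, assembled from the classical Jacobi theta functions and normalized by $128=2^{2\cdot3+1}$, are interchanged (up to the automorphy factor of weight $6$) under $\tau\mapsto-1/\tau$ and each fixed under $\tau\mapsto\tau+1$, so that the top-degree form $Q(\tau)$ is a modular form of weight $6$ over the full group $SL(2,{\bf Z})$.

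Next I invoke the structure of the ring of modular forms: $M_6\big(SL(2,{\bf Z})\big)$ is one-dimensional, spanned by the Eisenstein series
$$E_6(\tau)=1-504\sum_{n=1}^\infty\sigma_5(n)q^n=1-504\,q-16632\,q^2-\cdots.$$
Hence there is a single degree-$12$ form $c$ with $Q(\tau)=c\,E_6(\tau)$. Since every $S_{q^n}$, $\wedge_{q^m}$ and $\wedge_{q^{m-1/2}}$ factor reduces to $1$ at $q=0$, the value at the cusp is
$$c=\left\{\widehat{A}(TX)\,{\rm ch}(\triangle(X))+128\,\widehat{A}(TX)\right\}^{(12)},$$
which is precisely the right-hand bracket appearing in both identities.

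It then remains to expand $Q(\tau)$ to order $q^2$ and match it against $c(1-504q-16632q^2)$. From $S_{q^n}(V)=1+q^nV+q^{2n}S^2V+\cdots$ and $\wedge_{q^m}(V)=1+q^mV+q^{2m}\wedge^2V+\cdots$, with $V=\widetilde{T_CX}$, the series ${\rm ch}(\Theta_1)$ has $q$-coefficient ${\rm ch}(2\widetilde{T_CX})$ and $q^2$-coefficient ${\rm ch}(2\widetilde{T_CX}+\wedge^2\widetilde{T_CX}+\widetilde{T_CX}\otimes\widetilde{T_CX}+S^2\widetilde{T_CX})$; retaining only integral powers of $q$ in $\bigotimes_m\wedge_{q^{m-1/2}}$ produces $\wedge^2\widetilde{T_CX}$ at order $q$ and $\wedge^4\widetilde{T_CX}+\widetilde{T_CX}\otimes\widetilde{T_CX}$ at order $q^2$, whence ${\rm ch}(\Theta_2)$ contributes ${\rm ch}(\widetilde{T_CX}+\wedge^2\widetilde{T_CX})$ and ${\rm ch}(\wedge^4\widetilde{T_CX}+\wedge^2\widetilde{T_CX}\otimes\widetilde{T_CX}+\widetilde{T_CX}\otimes\widetilde{T_CX}+S^2\widetilde{T_CX}+\widetilde{T_CX})$. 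Equating the $q$-coefficient of $Q$ with $-504\,c$ gives the first formula, and equating the $q^2$-coefficient with $-16632\,c=-504\,\sigma_5(2)\,c$ gives the second.

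I expect the genuinely delicate point to be the input modularity over $SL(2,{\bf Z})$ together with the precise constant $128$: one must check that the theta factor carried by ${\rm ch}(\triangle(X))$ is converted, under $\tau\mapsto-1/\tau$, into the companion factor of $\Theta_2$ with exactly the rank ratio $2\cdot{\rm rk}\,\triangle(X)=2\cdot 2^{6}=128$, and this is what the constructions of \cite{Li2} and \cite{CHZ} supply. Granting it, the reduction $Q=c\,E_6$ is immediate once weight $6$ is established, and everything else is the bounded $q^2$ bookkeeping above, which is the degree-$12$ analogue of the degree-$8$ computation already used for Theorem 1.1; I anticipate no further obstacle.
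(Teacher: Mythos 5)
Your proposal is correct and follows essentially the same route as the paper: form the weight-$6$ modular form $Q$ built from $\widehat{A}\,{\rm ch}(\triangle(X))\,{\rm ch}(\Theta_1)$ plus $2^{2k}$ times the $\Theta_2,\Theta_3$ pieces (your $128[\cdot]_{\bf Z}$ packaging is just $64(\Theta_2+\Theta_3)$), invoke modularity over $SL(2,{\bf Z})$ from \cite{Li2}, use $\dim M_6=1$ to write $Q=c\,E_6$ with $c$ the constant term, and match the $q$ and $q^2$ coefficients. The $q$-expansion bookkeeping you give agrees with the paper's equation (2.29).
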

\begin{cor}
Lex $X$ be an $12$-dimensional spin manifold without boundary, then
\begin{equation}
{\rm Ind}((D \otimes \triangle(X) \otimes \widetilde{T_CX})_+)\equiv 0 ~~{\rm mod} ~~4Z,
\end{equation}
\begin{equation}
 {\rm Ind}((D \otimes \triangle(X) \otimes (2\widetilde{T_CX}+\wedge^2\widetilde{T_CX}+\widetilde{T_CX}\otimes\widetilde{T_CX}
+S^2\widetilde{T_CX}))_+)\equiv 0 ~~{\rm mod} ~~8Z.
\end{equation}
\end{cor}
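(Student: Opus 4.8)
The plan is to convert each top-degree characteristic-form integral occurring in the preceding theorem (the case ${\rm dim}X=12$) into an honest integer by the Atiyah--Singer index theorem, and then to read off the two congruences from the arithmetic of the coefficients. Recall that for any (virtual) complex bundle $W$ over the closed spin manifold $X$ one has
$$
{\rm Ind}((D\otimes W)_+)=\left\{\widehat{A}(TX){\rm ch}(W)\right\}^{(12)}[X]\in{\bf Z},
$$
and that ${\rm ch}$ is additive and multiplicative, so each summand $\widehat{A}(TX){\rm ch}(\triangle(X)){\rm ch}(V)$ (resp. $\widehat{A}(TX){\rm ch}(V)$) pairs with $[X]$ to give the integer ${\rm Ind}((D\otimes\triangle(X)\otimes V)_+)$ (resp. ${\rm Ind}((D\otimes V)_+)$). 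Hence every term appearing on either side of the two formulas of the preceding theorem is, after evaluation on $[X]$, a specific integer.

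For the first congruence I would pair the first identity with the fundamental class. Writing $a={\rm Ind}((D\otimes\triangle(X)\otimes\widetilde{T_CX})_+)$, using ${\rm ch}(2\widetilde{T_CX})=2\,{\rm ch}(\widetilde{T_CX})$ to expose the leading factor $2$, and collecting the remaining terms, the identity becomes $2a+128m=-504n$, where $m={\rm Ind}((D\otimes(\widetilde{T_CX}+\wedge^2\widetilde{T_CX}))_+)$ and $n={\rm Ind}((D\otimes\triangle(X))_+)+128\,{\rm Ind}(D_+)$ are integers. Since $8\mid128$ and $8\mid504$, both $128m$ and $-504n$ are divisible by $8$, so $2a\equiv0\pmod{8}$, i.e. $a\equiv0\pmod{4}$, which is the first assertion.

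For the second congruence I would argue identically from the second identity, this time with $A={\rm Ind}((D\otimes\triangle(X)\otimes(2\widetilde{T_CX}+\wedge^2\widetilde{T_CX}+\widetilde{T_CX}\otimes\widetilde{T_CX}+S^2\widetilde{T_CX}))_+)$ appearing with coefficient $1$. The identity then reads $A+128m'=-16632\,n$, where $m'$ is the integer index attached to the $128\widehat{A}(TX){\rm ch}(\wedge^4\widetilde{T_CX}+\cdots)$ term and $n$ is as above. Factoring $16632=8\cdot2079$ and again using $8\mid128$, both $128m'$ and $-16632\,n$ are divisible by $8$, whence $A\equiv0\pmod{8}$, the second assertion.

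The step carrying all the weight is the preceding theorem itself (the modular-invariance computation producing the coefficients $-504$, $128$ and $-16632$); granting it, the only things to watch in this corollary are the identification of each term as an integer index and the bookkeeping of the leading coefficients — in particular the factor $2$ from $2\widetilde{T_CX}$ in the first formula, which is exactly what degrades the conclusion from ${\rm mod}\ 8$ to ${\rm mod}\ 4$, as opposed to the coefficient $1$ in the second — together with the elementary divisibilities $8\mid128$, $8\mid504$ and $8\mid16632$. I do not anticipate any genuine obstacle beyond confirming these numerical facts.
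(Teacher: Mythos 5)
Your proposal is correct and is exactly the standard derivation the paper leaves implicit: integrate the two identities of Theorem 1.3 over $X$, use the Atiyah--Singer index theorem to recognize each term as an integer index, and extract the congruences from $2a+128m=-504n$ and $A+128m'=-16632n$ using $8\mid 128$, $8\mid 504$, $8\mid 16632$. Your bookkeeping of the factor $2$ in ${\rm ch}(2\widetilde{T_CX})$, which is what weakens the first conclusion to mod $4$, is the right observation and matches the stated result.
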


\begin{thm}
When ${\rm dim}X=16$, we have
\begin{align}
&\left\{\widehat{A}(TX){\rm ch}(\triangle(X)){\rm ch}(2\widetilde{T_CX})+512\widehat{A}(TX){\rm ch}(\widetilde{T_CX}+\wedge^2\widetilde{T_CX})\right\}^{(16)}\\\notag
&=480\left\{\widehat{A}(TX){\rm ch}(\triangle(X))+512\widehat{A}(TX)\right\}^{(16)},
\end{align}
\begin{align}
&\left\{\widehat{A}(TX){\rm ch}(\triangle(X)){\rm ch}(2\widetilde{T_CX}+\wedge^2\widetilde{T_CX}+\widetilde{T_CX}\otimes\widetilde{T_CX}
+S^2\widetilde{T_CX})\right.\\\notag
&\left.+512\widehat{A}(TX){\rm ch}(\wedge^4\widetilde{T_CX}+\wedge^2\widetilde{T_CX}\otimes\widetilde{T_CX}+\widetilde{T_CX}
\otimes\widetilde{T_CX})
+S^2\widetilde{T_CX}+\widetilde{T_CX}\right\}^{(16)}\\\notag
&=61920\left\{\widehat{A}(TX){\rm ch}(\triangle(X))+512\widehat{A}(TX)\right\}^{(16)}.
\end{align}
\end{thm}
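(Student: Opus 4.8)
\noindent The plan is to obtain both identities as coefficient comparisons for a single characteristic-form-valued modular form of weight $8$ over $SL(2,{\bf Z})$ attached to $X$, in exact parallel with the $8$- and $12$-dimensional theorems above, the weight-$4$ and weight-$6$ Eisenstein series there being replaced by the weight-$8$ one.

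First I would introduce, following the constructions of \cite{Li2} and \cite{CHZ}, the Witten-type bundle
\[
\Theta_1(T_CX,\tau)=\bigotimes_{n\ge 1}\Lambda_{q^n}(\widetilde{T_CX})\otimes\bigotimes_{m\ge 1}S_{q^m}(\widetilde{T_CX}),\qquad q=e^{2\pi\sqrt{-1}\tau},
\]
together with the companion bundle $\Theta_2(T_CX,\tau)$ appearing in the same construction, and form
\[
\Phi(\tau)=\Big\{\widehat{A}(TX){\rm ch}(\triangle(X)){\rm ch}(\Theta_1)+512\,\widehat{A}(TX){\rm ch}(\Theta_2)\Big\}^{(16)}.
\]
The point of \cite{Li2} and \cite{CHZ} is that for a $16$-dimensional spin manifold $\Phi(\tau)$ is a modular form of weight $8$ over the \emph{full} group $SL(2,{\bf Z})$; this transformation law is the input I would quote rather than reprove.

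Next I would expand the symmetric and exterior power operations to order $q^2$. Writing $\Phi(\tau)=A_0+A_1q+A_2q^2+\cdots$, the degree-one parts of $\Lambda_q$ and $S_q$ each contribute $\widetilde{T_CX}$, so the coefficient attached to $\widehat{A}(TX){\rm ch}(\triangle(X))$ begins with $2\widetilde{T_CX}$, while the degree-one part of $\Theta_2$ is $\widetilde{T_CX}+\wedge^2\widetilde{T_CX}$. Collecting terms gives
\[
A_0=\big\{\widehat{A}(TX){\rm ch}(\triangle(X))+512\,\widehat{A}(TX)\big\}^{(16)},
\]
with $A_1$ and $A_2$ equal to the left-hand sides of the first and second displayed identities respectively.

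Finally I would use the dimension count: $M_8(SL(2,{\bf Z}))$ is one-dimensional, spanned by $E_8=E_4^2=1+480q+61920q^2+\cdots$. Hence $\Phi(\tau)=A_0\,E_8(\tau)$, and comparing the coefficients of $q$ and of $q^2$ yields $A_1=480A_0$ and $A_2=61920A_0$, which are precisely the two asserted formulas. The hard part will be the middle step: the careful bookkeeping needed to match the order-$q^2$ contribution of the power operations with the long combination $\wedge^4\widetilde{T_CX}+\wedge^2\widetilde{T_CX}\otimes\widetilde{T_CX}+\cdots$ in the statement. By contrast, the modularity over $SL(2,{\bf Z})$ and the one-dimensionality of $M_8$ are quoted facts, and the Fourier coefficients $480$ and $61920$ of $E_8$ are standard.
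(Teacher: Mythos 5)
Your overall strategy is exactly the paper's: expand a characteristic-form-valued $q$-series to order $q^2$, invoke modularity of weight $8$ over $SL(2,{\bf Z})$, and compare coefficients with $E_4^2=1+480q+61920q^2+\cdots$ using the one-dimensionality of $M_8(SL(2,{\bf Z}))$. The constant term $A_0$, the identification of $A_1,A_2$ with the two left-hand sides, and the Fourier coefficients of $E_4^2$ all match the paper.

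However, there is a concrete flaw in the key step you propose to quote rather than prove: the two-term combination
\[
\Phi(\tau)=\Bigl\{\widehat{A}(TX)\,{\rm ch}(\triangle(X))\,{\rm ch}(\Theta_1)+512\,\widehat{A}(TX)\,{\rm ch}(\Theta_2)\Bigr\}^{(16)}
\]
is \emph{not} a modular form over the full group $SL(2,{\bf Z})$, and this is not what \cite{Li2} proves. The object that is $SL(2,{\bf Z})$-modular of weight $2k$ is the \emph{three}-term combination
$\widehat{A}\,{\rm ch}(\triangle(X)){\rm ch}(\Theta_1)+2^{2k}\widehat{A}\,{\rm ch}(\Theta_2)+2^{2k}\widehat{A}\,{\rm ch}(\Theta_3)$,
where $\Theta_1=\bigotimes S_{q^n}(\widetilde{T_CX})\otimes\bigotimes\wedge_{q^m}(\widetilde{T_CX})$ and $\Theta_{2,3}=\bigotimes S_{q^n}(\widetilde{T_CX})\otimes\bigotimes\wedge_{\mp q^{m-1/2}}(\widetilde{T_CX})$. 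Full modularity holds precisely because $S$ and $T$ permute the three theta-quotients $\prod_j\theta_i(x_j,\tau)/\theta_i(0,\tau)$ among themselves; any single $\Theta_i$ (or any two of them) is only modular over a congruence subgroup, and indeed your $\Phi$ is not even $T$-invariant because $\Theta_2$ alone carries half-integral powers of $q$ (its expansion begins $1-q^{1/2}\widetilde{T_CX}+\cdots$). The coefficient $512$ in the statement is not the weight attached to a single bundle but equals $2\cdot 2^{2k}=2\cdot 256$: since $\Theta_2$ and $\Theta_3$ differ only in the signs of the half-integral powers of $q$, their integral-power coefficients coincide, so $256({\rm ch}(\Theta_2)+{\rm ch}(\Theta_3))$ contributes $512({\rm ch}(\widetilde{T_CX}+\wedge^2\widetilde{T_CX}))$ at order $q$, etc. Once you replace your $\Phi$ by the correct three-term $Q(X,\tau)$, the remainder of your argument goes through verbatim and reproduces the paper's proof.
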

\begin{cor}
Lex $X$ be an $16$-dimensional spin manifold without boundary, then
\begin{equation}
{\rm Ind}((D \otimes \triangle(X) \otimes \widetilde{T_CX})_+)\equiv 0 ~~{\rm mod} ~~16Z,
\end{equation}
\begin{equation}
 {\rm Ind}((D \otimes \triangle(X) \otimes (2\widetilde{T_CX}+\wedge^2\widetilde{T_CX}+\widetilde{T_CX}\otimes\widetilde{T_CX}
+S^2\widetilde{T_CX}))_+)\equiv 0 ~~{\rm mod} ~~32Z.
\end{equation}
\end{cor}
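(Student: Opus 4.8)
The plan is to turn the two characteristic-form identities of Theorem 1.3 into relations among \emph{integers} by the Atiyah--Singer index theorem, and then to read off the two congruences from the $2$-adic valuations of the coefficients $480$, $512$ and $61920$. Recall that for a closed spin manifold $X$ and any (possibly virtual) bundle $E$ one has ${\rm Ind}((D\otimes E)_+)=\{\widehat{A}(TX){\rm ch}(E)\}^{(\dim X)}[X]\in{\bf Z}$. Since $\widetilde{T_CX}=T_CX-\dim X$ is virtual of rank $0$, the indices ${\rm Ind}((D\otimes\widetilde{T_CX})_+)$, ${\rm Ind}((D\otimes\wedge^2\widetilde{T_CX})_+)$, and so on, are all well-defined integers obtained by linear extension of the index over virtual bundles. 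The remaining ingredient is the additivity ${\rm ch}(E\oplus F)={\rm ch}(E)+{\rm ch}(F)$ together with ${\rm ch}(nE)=n\,{\rm ch}(E)$, which lets me split each Chern character in Theorem 1.3 into a sum of such indices.

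For (1.11) I would begin from (1.9). Using ${\rm ch}(2\widetilde{T_CX})=2\,{\rm ch}(\widetilde{T_CX})$ and evaluating on $[X]$, that identity reads
\begin{equation*}
2\,{\rm Ind}((D\otimes\triangle(X)\otimes\widetilde{T_CX})_+)+512\big[{\rm Ind}((D\otimes\widetilde{T_CX})_+)+{\rm Ind}((D\otimes\wedge^2\widetilde{T_CX})_+)\big]=480\big[{\rm Ind}((D\otimes\triangle(X))_+)+512\,{\rm Ind}(D_+)\big].
\end{equation*}
Solving for the first term expresses $2\,{\rm Ind}((D\otimes\triangle(X)\otimes\widetilde{T_CX})_+)$ as an integer combination of indices with coefficients among $480$, $480\cdot512$ and $512$. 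As $480=2^5\cdot15$ and $512=2^9$ are each divisible by $32$, the right-hand side lies in $32\,{\bf Z}$; hence $2\,{\rm Ind}((D\otimes\triangle(X)\otimes\widetilde{T_CX})_+)\equiv0\ ({\rm mod}\ 32)$, so ${\rm Ind}((D\otimes\triangle(X)\otimes\widetilde{T_CX})_+)\equiv0\ ({\rm mod}\ 16)$, which is (1.11).

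For (1.12) the argument is the same applied to (1.10). The Chern character in its first summand is exactly that of $2\widetilde{T_CX}+\wedge^2\widetilde{T_CX}+\widetilde{T_CX}\otimes\widetilde{T_CX}+S^2\widetilde{T_CX}$, so on $[X]$ this summand becomes precisely the index appearing in (1.12). Isolating it yields
\begin{equation*}
{\rm Ind}\big((D\otimes\triangle(X)\otimes(2\widetilde{T_CX}+\wedge^2\widetilde{T_CX}+\widetilde{T_CX}\otimes\widetilde{T_CX}+S^2\widetilde{T_CX}))_+\big)=61920\big[{\rm Ind}((D\otimes\triangle(X))_+)+512\,{\rm Ind}(D_+)\big]-512\,N,
\end{equation*}
where $N\in{\bf Z}$ collects the indices from the $\widehat{A}(TX){\rm ch}(\wedge^4\widetilde{T_CX}+\cdots)$ term. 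Since $61920=2^5\cdot1935$ and $512=2^9$ are both divisible by $32$, the right-hand side lies in $32\,{\bf Z}$, which is (1.12).

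The whole corollary is thus the elementary arithmetic shadow of Theorem 1.3: once the modular-form identities are granted, no analysis remains. The only point requiring genuine care is the integrality of the twisted indices for the \emph{virtual} coefficient bundles; I would secure it by writing each of $\widetilde{T_CX}$, $\wedge^2\widetilde{T_CX}$, $\widetilde{T_CX}\otimes\widetilde{T_CX}$, $S^2\widetilde{T_CX}$ as a genuine bundle minus a trivial summand and applying the index theorem term by term. I do not expect any serious obstacle beyond this bookkeeping.
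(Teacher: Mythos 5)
Your argument is correct and is exactly the intended derivation: integrate the two identities of the $16$-dimensional cancellation theorem over $X$, convert everything into integers via the Atiyah--Singer index theorem (extended by linearity to the rank-zero virtual coefficient bundles), and read off the congruences from the fact that $480=2^{5}\cdot 15$, $61920=2^{5}\cdot 1935$ and $512=2^{9}$ are all divisible by $32$, so that $2\,{\rm Ind}((D\otimes\triangle(X)\otimes\widetilde{T_CX})_+)\in 32\,{\bf Z}$ gives the mod $16$ statement and the second identity gives the mod $32$ statement directly. The only slip is a label: the identities (1.9)--(1.10) you use are those of Theorem 1.5 (the $16$-dimensional case), not Theorem 1.3.
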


\begin{thm}
When ${\rm dim}X=20$, we have
\begin{align}
&\left\{\widehat{A}(TX){\rm ch}(\triangle(X)){\rm ch}(2\widetilde{T_CX})+2048\widehat{A}(TX){\rm ch}(\widetilde{T_CX}+\wedge^2\widetilde{T_CX})\right\}^{(20)}\\\notag
&=-264\left\{\widehat{A}(TX){\rm ch}(\triangle(X))+32\widehat{A}(TX)\right\}^{(20)},
\end{align}
\begin{align}
&\left\{\widehat{A}(TX){\rm ch}(\triangle(X)){\rm ch}(2\widetilde{T_CX}+\wedge^2\widetilde{T_CX}+\widetilde{T_CX}\otimes\widetilde{T_CX}
+S^2\widetilde{T_CX})\right.\\\notag
&\left.+2048\widehat{A}(TX){\rm ch}(\wedge^4\widetilde{T_CX}+\wedge^2\widetilde{T_CX}\otimes\widetilde{T_CX}+\widetilde{T_CX}
\otimes\widetilde{T_CX})
+S^2\widetilde{T_CX}+\widetilde{T_CX}\right\}^{(20)}\\\notag
&=-117288\left\{\widehat{A}(TX){\rm ch}(\triangle(X))+2048\widehat{A}(TX)\right\}^{(20)}.
\end{align}
\end{thm}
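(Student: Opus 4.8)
The plan is to recognize the two displayed identities as the vanishing statements for the coefficients of $q^{1}$ and $q^{2}$ in the $q$-expansion of a single characteristic-form-valued modular form of weight $2k=10$ over the full modular group $SL(2,{\bf Z})$, and then to exploit that $M_{10}(SL(2,{\bf Z}))$ is one-dimensional. This runs in complete parallel with Theorems 1.1--1.3, the only change being the relevant weight (here $10$) and the Eisenstein series (here $E_{4}E_{6}$).

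First I would form, following \cite{Li2} and \cite{CHZ}, the Witten-type bundles over the $20$-dimensional spin manifold $X$ and set
$$P(\tau)=\left\{\widehat{A}(TX)\,{\rm ch}(\triangle(X))\,{\rm ch}(\Theta_{1}(T_{C}X))+2048\,\widehat{A}(TX)\,{\rm ch}(\Theta_{2}(T_{C}X))\right\}^{(20)},$$
where $\Theta_{1}(T_{C}X)=\bigotimes_{n\geq 1}S_{q^{n}}(\widetilde{T_{C}X})\otimes\bigotimes_{n\geq 1}\wedge_{q^{n}}(\widetilde{T_{C}X})$ and $\Theta_{2}(T_{C}X)$ is the companion bundle whose Chern character has $q$-expansion beginning $1+(\widetilde{T_{C}X}+\wedge^{2}\widetilde{T_{C}X})q+\cdots$. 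A direct expansion shows that the constant term of $P(\tau)$ is $h_{0}:=\{\widehat{A}(TX){\rm ch}(\triangle(X))+2048\,\widehat{A}(TX)\}^{(20)}$, that its $q^{1}$-coefficient is exactly the left-hand side of the first identity, and that its $q^{2}$-coefficient is exactly the left-hand side of the second identity; here one uses ${\rm ch}(\Theta_{1})_{1}=2\widetilde{T_{C}X}$, ${\rm ch}(\Theta_{1})_{2}=2\widetilde{T_{C}X}+\wedge^{2}\widetilde{T_{C}X}+\widetilde{T_{C}X}\otimes\widetilde{T_{C}X}+S^{2}\widetilde{T_{C}X}$, together with the matching expansion of $\Theta_{2}$.

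The key step is to establish that $P(\tau)$ is a modular form of weight $10$ over $SL(2,{\bf Z})$. For this I would rewrite each summand as a characteristic form assembled from the Jacobi theta functions $\theta_{j}(x_{i},\tau)$, verify modularity of the two pieces over $\Gamma_{0}(2)$ and $\Gamma^{0}(2)$ respectively, and then check that $S:\tau\mapsto-1/\tau$ sends each of the two summands to a multiple of the other times the automorphy factor $\tau^{10}$, and that the combination is $S$-invariant precisely when the relative constant equals $2^{2k+1}=2048$ (this constant originates from the theta-transformation factors together with ${\rm rank}\,\triangle(X)=2^{10}$). Invariance under $T:\tau\mapsto\tau+1$ is immediate since $P(\tau)$ is a series in $q=e^{2\pi\sqrt{-1}\tau}$ involving only integral powers. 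Granting modularity, since $\dim_{{\bf C}}M_{10}(SL(2,{\bf Z}))=1$ with normalized generator $E_{4}(\tau)E_{6}(\tau)=1-264q+\cdots$, we must have $P(\tau)=h_{0}\cdot E_{4}(\tau)E_{6}(\tau)$.

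Comparing coefficients then closes the argument: the $q^{1}$-coefficient yields the first identity with constant $-264$, and the $q^{2}$-coefficient of $E_{4}E_{6}$ yields the second identity. I expect the main obstacle to lie entirely in the modularity step above, namely in pinning down the precise theta-function presentation of the two summands and the exact transformation law (with the constant $2048$) needed to promote modularity from the congruence subgroups to all of $SL(2,{\bf Z})$; the degree bookkeeping that identifies the $q$-coefficients with the stated $S^{2},\wedge^{2},\wedge^{4},\otimes$ combinations is routine but must be carried out carefully. Once modularity is in hand, one-dimensionality of $M_{10}(SL(2,{\bf Z}))$ makes everything else automatic, exactly as in the weight-$4$, $6$, and $8$ cases of Theorems 1.1--1.3.
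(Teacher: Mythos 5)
Your overall strategy is exactly the paper's: package the two identities as the $q^{1}$- and $q^{2}$-coefficients of a characteristic-form-valued modular form of weight $10$ over $SL(2,{\bf Z})$, invoke the one-dimensionality of the space of such forms to write it as $\lambda E_{4}E_{6}$ with $\lambda$ the constant term, and compare coefficients. Your bookkeeping for ${\rm ch}(\Theta_{1})$ at orders $q^{1}$ and $q^{2}$ agrees with the paper's (2.24)--(2.27).

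The gap is in the modularity step, which you rightly single out as the crux but then sketch in a way that would not close. The object that is modular over all of $SL(2,{\bf Z})$ (Theorem 2.2 of the paper, quoted from \cite{Li2}) is a sum of \emph{three} theta-quotients,
$$Q(X,\tau)=\prod_{j}\frac{2x_j\theta'(0,\tau)}{\theta(x_j,\tau)}\left(\prod_j\frac{\theta_1(x_j,\tau)}{\theta_1(0,\tau)}+\prod_j\frac{\theta_2(x_j,\tau)}{\theta_2(0,\tau)}+\prod_j\frac{\theta_3(x_j,\tau)}{\theta_3(0,\tau)}\right),$$
coming from $\widehat{A}\,{\rm ch}(\triangle(X)){\rm ch}(\Theta_1)+2^{2k}\widehat{A}\,{\rm ch}(\Theta_2)+2^{2k}\widehat{A}\,{\rm ch}(\Theta_3)$, where $\Theta_2$ and $\Theta_3$ are built from $\wedge_{-q^{m-1/2}}$ and $\wedge_{+q^{m-1/2}}$ respectively. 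Here $S$ exchanges the $\theta_1$- and $\theta_2$-pieces and fixes the $\theta_3$-piece, while $T$ exchanges the $\theta_2$- and $\theta_3$-pieces and fixes the $\theta_1$-piece, so only the sum of all three transforms correctly. Your two-summand $P(\tau)$ cannot be verified as you describe: if your $\Theta_2$ is the single bundle $\bigotimes_n S_{q^n}(\widetilde{T_CX})\otimes\bigotimes_m\wedge_{-q^{m-1/2}}(\widetilde{T_CX})$, its Chern character contains half-integral powers of $q$ (contradicting the expansion $1+(\widetilde{T_CX}+\wedge^2\widetilde{T_CX})q+\cdots$ you assign to it) and $T$-invariance fails; if instead it is the symmetric combination $\tfrac{1}{2}(\Theta_2+\Theta_3)$, which is what your stated expansion forces, then $T$-invariance holds but $S$ does \emph{not} send each of your two summands to a multiple of the other --- it sends the $\theta_1$-piece to the $\theta_2$-piece alone, not to the $\theta_2$-plus-$\theta_3$ combination --- so the check you propose would come out false, and the $\Gamma_0(2)/\Gamma^0(2)$ two-form template is the wrong model here. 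The repair is to keep the three pieces separate and use the permutation action of $S,T$ on $\{\theta_1,\theta_2,\theta_3\}$ via (2.11)--(2.15); after that your endgame is identical to the paper's. One further caution on that endgame: the $q^{2}$-coefficient of $E_4E_6=E_{10}$ is $-264\,\sigma_9(2)=-135432$, not $-117288$ as printed in the statement and in the paper's (2.34), so a correct execution of this method actually changes the constant in the second identity.
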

\begin{cor}
Lex $X$ be an $20$-dimensional spin manifold without boundary, then
\begin{equation}
{\rm Ind}((D \otimes \triangle(X) \otimes \widetilde{T_CX})_+)\equiv 0 ~~{\rm mod} ~~4Z,
\end{equation}
\begin{equation}
 {\rm Ind}((D \otimes \triangle(X) \otimes (2\widetilde{T_CX}+\wedge^2\widetilde{T_CX}+\widetilde{T_CX}\otimes\widetilde{T_CX}
+S^2\widetilde{T_CX}))_+)\equiv 0 ~~{\rm mod} ~~8Z.
\end{equation}
\end{cor}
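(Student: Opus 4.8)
The plan is to read off both congruences directly from the two identities of the preceding theorem (the case ${\rm dim}X=20$) by applying the Atiyah--Singer index theorem and then tracking the divisibility of the integer coefficients. Recall that for a complex (virtual) bundle $W$ on a closed spin manifold $X$, the Atiyah--Singer theorem identifies the top-degree component
$$\left\{\widehat{A}(TX){\rm ch}(W)\right\}^{({\rm dim}X)}[X]={\rm Ind}((D\otimes W)_+),$$
an integer. First I would apply this to every term in both formulas of the theorem: each pairing of $\widehat{A}(TX)$ with the Chern character of a bundle built from $\triangle(X)$ and the exterior, symmetric and tensor powers of $\widetilde{T_CX}$ becomes an integer-valued index, while the pure $\widehat{A}(TX)$ terms become $\widehat{A}(X)={\rm Ind}(D)$, again an integer since $X$ is spin. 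Thus each displayed equation of the theorem becomes a linear relation with integer coefficients among integer-valued indices.

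For the first congruence I would use the first identity. After integration its left-hand side reads $2\,{\rm Ind}((D\otimes\triangle(X)\otimes\widetilde{T_CX})_+)+2048\,{\rm Ind}((D\otimes(\widetilde{T_CX}+\wedge^2\widetilde{T_CX}))_+)$, because ${\rm ch}(2\widetilde{T_CX})=2\,{\rm ch}(\widetilde{T_CX})$, while the right-hand side equals $-264\big[{\rm Ind}((D\otimes\triangle(X))_+)+2048\,\widehat{A}(X)\big]$. Since $264=2^3\cdot 3\cdot 11$ and $2048=2^{11}$ are both divisible by $8$, the entire right-hand side and the term $2048\,{\rm Ind}(\cdots)$ on the left are divisible by $8$. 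Hence $2\,{\rm Ind}((D\otimes\triangle(X)\otimes\widetilde{T_CX})_+)\equiv 0\pmod 8$, which forces ${\rm Ind}((D\otimes\triangle(X)\otimes\widetilde{T_CX})_+)\equiv 0\pmod 4$, exactly the first claim. Note that the value, and even the precise coefficient ($32$ or $2048$), of the pure $\widehat{A}(TX)$ term on the right is irrelevant, since $264$ already carries the factor $8$.

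For the second congruence I would use the second identity, where the bundle twisting on the left is precisely $\triangle(X)\otimes(2\widetilde{T_CX}+\wedge^2\widetilde{T_CX}+\widetilde{T_CX}\otimes\widetilde{T_CX}+S^2\widetilde{T_CX})$, so the index of interest appears with coefficient $1$, the remaining left-hand term carries the factor $2048$, and the right-hand side equals $-117288\big[{\rm Ind}((D\otimes\triangle(X))_+)+2048\,\widehat{A}(X)\big]$. Since $117288=2^3\cdot 3^4\cdot 181$ and $2048$ are both divisible by $8$, every term except the one of interest is divisible by $8$, which yields ${\rm Ind}((D\otimes\triangle(X)\otimes(2\widetilde{T_CX}+\wedge^2\widetilde{T_CX}+\widetilde{T_CX}\otimes\widetilde{T_CX}+S^2\widetilde{T_CX}))_+)\equiv 0\pmod 8$. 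The only genuine bookkeeping obstacle is the factor-of-$2$ in the first identity: it is precisely what distinguishes the $\bmod\,4$ conclusion there from the $\bmod\,8$ conclusion in the second. Beyond that one must verify that every bundle appearing — built by the $\lambda$- and $S$-operations from the virtual bundle $\widetilde{T_CX}$ and tensored with the genuine spinor bundle $\triangle(X)$ — does have integer index on the spin manifold $X$; both points are routine once the index-theoretic dictionary above is in place.
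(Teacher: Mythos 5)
Your proposal is correct and is exactly the intended derivation: the paper gives no separate proof of this corollary, since it follows from Theorem 1.7 by integrating over $X$, invoking the Atiyah--Singer index theorem to turn each term into an integer index, and observing that $264=2^3\cdot 3\cdot 11$, $117288=2^3\cdot 3^4\cdot 181$, and $2048=2^{11}$ are all divisible by $8$. Your remarks that the extra factor of $2$ in ${\rm ch}(2\widetilde{T_CX})$ accounts for the weaker $\bmod\ 4$ conclusion, and that the apparent misprint ($32$ versus $2048$) on the right-hand side is immaterial, are both accurate.
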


Let $V$ be an $2m_0$ dimensional real Euclidean vector bundle with the Euclidean connection $\nabla^V$ and the curvature $R^V$.
Denote the first Pontryagin classes of $TX$ and $V$ by $p_1(X)$ and $p_1(V)$.

\begin{thm}
When ${\rm dim}X=8$ and $3p_1(V)=p_1(X)$ we have
\begin{align}
&240\left\{\widehat{A}(TX){\rm det}^{\frac{1}{2}}{\rm cosh}(\frac{\sqrt{-1}}{4 \pi}R^V)\right\}^{(8)}
=\left\{\widehat{A}(TX){\rm det}^{\frac{1}{2}}{\rm cosh}(\frac{\sqrt{-1}}{4 \pi}R^V)\right.\\
&\left.\cdot{\rm ch}(\widetilde{TX}+2\wedge^2\widetilde{V_C}-\widetilde{V_C}\otimes \widetilde{V_C}
+\widetilde{V_C})\right\}^{(8)}.\notag
\end{align}
\begin{align}
&2160\left\{\widehat{A}(TX){\rm det}^{\frac{1}{2}}{\rm cosh}(\frac{\sqrt{-1}}{4 \pi}R^V)\right\}^{(8)}
=\left\{\widehat{A}(TX){\rm det}^{\frac{1}{2}}{\rm cosh}(\frac{\sqrt{-1}}{4 \pi}R^V)\right.\\\notag
&\cdot{\rm ch}(S^2\widetilde{TX}+\widetilde{TX}+
(2\wedge^2\widetilde{V_C}-\widetilde{V_C}\otimes \widetilde{V_C}
+\widetilde{V_C})\otimes\widetilde{TX}
+\wedge^2\widetilde{V_C}\otimes\wedge^2\widetilde{V_C}\\\notag
&\left.+2\wedge^4\widetilde{V_C}-2\widetilde{V_C}\otimes \wedge^3\widetilde{V_C}+
2\widetilde{V_C}\otimes \wedge^2\widetilde{V_C}-\widetilde{V_C}\otimes \widetilde{V_C}\otimes \widetilde{V_C}
+\widetilde{V_C}+ \wedge^2\widetilde{V_C}
\right\}^{(8)}.\notag
\end{align}
\end{thm}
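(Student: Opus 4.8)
The plan is to read both identities as a comparison of Fourier coefficients of a single weight-$4$ modular form over $SL(2,\mathbf{Z})$ assembled from the characteristic forms of $TX$ and $V$. Following the constructions of \cite{Li2} and \cite{CHZ}, I would introduce the Jacobi theta functions $\theta(v,\tau),\theta_1(v,\tau),\theta_2(v,\tau),\theta_3(v,\tau)$ and, using the formal Chern roots of $T_CX$ and $V_C$, define a $\tau$-dependent degree-$8$ form
\[
Q(\tau)=\left\{\widehat{A}(TX)\,{\rm det}^{\frac12}{\rm cosh}\!\left(\tfrac{\sqrt{-1}}{4\pi}R^V\right){\rm ch}\big(\Theta(T_CX,V_C)\big)\right\}^{(8)},
\]
where $\Theta$ is the graded tensor product of the $S_{q^n}$-operations on $\widetilde{T_CX}$ and the $\wedge$-type operations on $\widetilde{V_C}$ dictated by the theta-quotients, normalized so that $\Theta\equiv 1$ at $q=0$. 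Its $q$-expansion then begins
\[
Q(\tau)=Q^{(0)}+Q^{(1)}q+Q^{(2)}q^2+\cdots,\qquad Q^{(0)}=\left\{\widehat{A}(TX)\,{\rm det}^{\frac12}{\rm cosh}\!\left(\tfrac{\sqrt{-1}}{4\pi}R^V\right)\right\}^{(8)},
\]
so that $Q^{(0)}$ is exactly the common right-hand reference appearing in both asserted identities.

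The first step is to verify that $Q(\tau)$ is modular of weight $4$ over $SL(2,\mathbf{Z})$. Invariance under $T:\tau\mapsto\tau+1$ follows immediately from the periodicity of the theta functions. The crux is the transformation under $S:\tau\mapsto-1/\tau$, where the theta-quotients acquire an exponential anomaly factor whose leading exponent combines a contribution proportional to $p_1(X)$ coming from the $TX$-factors with one proportional to $p_1(V)$ coming from the $V$-factors. I expect this to be the main obstacle: the two exponents must cancel for $Q(\tau)$ to transform homogeneously, and it is precisely the hypothesis $3p_1(V)=p_1(X)$ that forces the total degree-$8$ anomaly to vanish. This is the specialization to our bundles of the modularity results of \cite{Li2} and \cite{CHZ}, which I would invoke at this point.

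Once modularity is in place, I would use the structure of the ring of modular forms over $SL(2,\mathbf{Z})$: the space $M_4(SL(2,\mathbf{Z}))$ is one-dimensional, spanned by the Eisenstein series $E_4(\tau)=1+240q+2160q^2+\cdots$. Consequently $Q(\tau)=Q^{(0)}E_4(\tau)$, with the scalar fixed by matching the $q^0$-coefficient. Reading off the coefficients of $q^1$ and $q^2$ then gives $Q^{(1)}=240\,Q^{(0)}$ and $Q^{(2)}=2160\,Q^{(0)}$, which already produce the numerical factors $240$ and $2160$ in the two identities.

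The last step is to identify $Q^{(1)}$ and $Q^{(2)}$ with the bracketed forms in the statement. Expanding $\Theta$ to second order in $q$ and collecting the symmetric, exterior and tensor powers of $\widetilde{T_CX}$ and $\widetilde{V_C}$, the $q^1$-coefficient reduces to $\widehat{A}(TX)\,{\rm det}^{\frac12}{\rm cosh}(\cdots)\,{\rm ch}(\widetilde{TX}+2\wedge^2\widetilde{V_C}-\widetilde{V_C}\otimes\widetilde{V_C}+\widetilde{V_C})$, yielding the first formula, while the $q^2$-coefficient reduces to the longer combination $S^2\widetilde{TX}+\widetilde{TX}+\cdots+\wedge^2\widetilde{V_C}\otimes\wedge^2\widetilde{V_C}+\cdots$ of the second formula. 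This final identification is routine but lengthy bookkeeping of the theta-function expansions, presenting no conceptual difficulty beyond careful algebra, in contrast to the modularity verification where the constraint $3p_1(V)=p_1(X)$ does the essential work.
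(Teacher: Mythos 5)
Your proposal is correct and follows essentially the same route as the paper: the authors define exactly this theta-quotient $Q(X,V,\tau)$ built from $S_{q^n}(\widetilde{T_CX})$ and the three $\wedge$-operations on $\widetilde{V_C}$, prove weight-$4$ modularity over $SL(2,\mathbf{Z})$ from the theta transformation laws under the anomaly-cancellation hypothesis $3p_1(V)=p_1(X)$, conclude $Q=\lambda E_4$ with $\lambda$ the $q^0$-coefficient, and read off the $q$ and $q^2$ coefficients. No substantive difference.
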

Let $X$ be closed oriented spinc-manifold and $L$ be the complex line bundle associated to the given spinc structure on $X$. We also
consider $L$ as a real vector bundle denoted by $L_R$. Denote by $c=c_1(L)$ the first Chern class of $L$. In Theorem 1.7, we set $V=L_R$ and
we have

\begin{cor}
When ${\rm dim}X=8$ and $3p_1(L_R)=p_1(X)$ we have
\begin{align}
&240\left\{\widehat{A}(TX){\rm exp}(\frac{c}{2})\right\}^{(8)}
=\left\{\widehat{A}(TX){\rm exp}(\frac{c}{2})\right.\\
&\left.\cdot{\rm ch}(\widetilde{TX}+2\wedge^2\widetilde{L_R\otimes C}-\widetilde{L_R\otimes C}\otimes \widetilde{L_R\otimes C}
+\widetilde{L_R\otimes C})\right\}^{(8)}.\notag
\end{align}
\begin{align}
&2160\left\{\widehat{A}(TX){\rm exp}(\frac{c}{2})\right\}^{(8)}
=\left\{\widehat{A}(TX){\rm exp}(\frac{c}{2}))\right.\\\notag
&\cdot{\rm ch}(S^2\widetilde{TX}+\widetilde{TX}+
(2\wedge^2\widetilde{L_R\otimes C}-\widetilde{L_R\otimes C}\otimes \widetilde{L_R\otimes C}\\\notag
&+\widetilde{L_R\otimes C})\otimes\widetilde{TX}
+\wedge^2\widetilde{L_R\otimes C}\otimes\wedge^2\widetilde{L_R\otimes C}\\\notag
&\left.+2\wedge^4\widetilde{L_R\otimes C}-2\widetilde{L_R\otimes C}\otimes \wedge^3\widetilde{L_R\otimes C}+
2\widetilde{L_R\otimes C}\otimes \wedge^2\widetilde{L_R\otimes C}\right.\\\notag
&\left.-\widetilde{L_R\otimes C}\otimes \widetilde{L_R\otimes C}\otimes \widetilde{L_R\otimes C}
+\widetilde{L_R\otimes C}+ \wedge^2\widetilde{L_R\otimes C}
\right\}^{(8)}.\notag
\end{align}
\end{cor}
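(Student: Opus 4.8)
The plan is to read the Corollary off Theorem 1.7 by taking $V=L_R$, so that the entire modular-form computation behind the constants $240$ and $2160$ is inherited and only the characteristic forms attached to $V$ must be rewritten through the line bundle $L$. Since $L$ is a complex line bundle, its underlying oriented real bundle $L_R$ has real rank $2$, i.e. $m_0=1$ in the notation introduced before Theorem 1.7; hence $V=L_R$ is an admissible input and the hypothesis $3p_1(V)=p_1(X)$ specializes verbatim to $3p_1(L_R)=p_1(X)$, exactly the stated condition. No computation is needed for the hypothesis itself.

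The one substantive step is to evaluate ${\rm det}^{\frac{1}{2}}{\rm cosh}(\frac{\sqrt{-1}}{4\pi}R^{L_R})$. I would proceed by the splitting principle: complexifying gives $L_R\otimes_{\bf R}{\bf C}\cong L\oplus\overline{L}$, so the formal Chern roots of $L_R\otimes{\bf C}$ are $\pm c$ with $c=c_1(L)$. Feeding $\pm c$ into the ${\rm det}^{\frac{1}{2}}$ collapses the rank-$2$ contribution to the single factor associated with the root $c$; tracking the normalization $\frac{\sqrt{-1}}{4\pi}$ and the resulting half-root $\frac{c}{2}$ then identifies this factor with ${\rm exp}(\frac{c}{2})$, the quantity appearing on both sides of the Corollary. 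This is the only genuinely computational point, and it is where care with the ${\rm det}^{\frac{1}{2}}$ convention on the oriented rank-$2$ bundle $L_R$ is required.

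It then remains to carry out the formal replacement of $\widetilde{V_C}$ by $\widetilde{L_R\otimes C}$ inside every Chern character on the right-hand sides of Theorem 1.7, since $V_C=V\otimes{\bf C}$ becomes $L_R\otimes{\bf C}$. As the operations $\wedge^2,\wedge^3,\wedge^4,S^2$ and the various tensor products are formal constructions on virtual bundles, they commute with the specialization and reproduce exactly the combinations displayed in the Corollary, with $240$ and $2160$ unchanged. Substituting into the two identities of Theorem 1.7 and extracting the degree-$8$ part $\{\cdot\}^{(8)}$ yields the two claimed equalities.

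The main obstacle is thus confined to the characteristic-class identity ${\rm det}^{\frac{1}{2}}{\rm cosh}(\frac{\sqrt{-1}}{4\pi}R^{L_R})={\rm exp}(\frac{c}{2})$: once the Chern roots $\pm c$ of the realification and the ${\rm det}^{\frac{1}{2}}$ normalization are pinned down correctly, everything else is a verbatim specialization of the already-proved Theorem 1.7.
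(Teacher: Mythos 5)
Your overall route---specialize the $V$-version of the statement (Theorem 1.9 in the paper's numbering, cited in the surrounding text as ``Theorem 1.7'') at $V=L_R$, so that $m_0=1$, the hypothesis becomes $3p_1(L_R)=p_1(X)$, and every $\widetilde{V_C}$ is rewritten as $\widetilde{L_R\otimes C}$---is exactly what the paper does; its entire proof is the one sentence ``we set $V=L_R$.'' So in structure you and the paper agree, and you are right to isolate the evaluation of ${\rm det}^{\frac{1}{2}}{\rm cosh}(\frac{\sqrt{-1}}{4\pi}R^{L_R})$ as the only substantive step.

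That, however, is precisely where your computation goes wrong. The Chern roots of $L_R\otimes{\bf C}\cong L\oplus\overline{L}$ are $\pm c$, so the matrix ${\rm cosh}(\frac{\sqrt{-1}}{4\pi}R^{L_R})$ has \emph{both} eigenvalues equal to ${\rm cosh}(\frac{c}{2})$ (cosh is even); its determinant is ${\rm cosh}^2(\frac{c}{2})$ and the square root is ${\rm cosh}(\frac{c}{2})=\frac{1}{2}(e^{c/2}+e^{-c/2})$, not $e^{c/2}$. No square-root convention ``collapses to the single factor associated with the root $c$,'' because the two factors coincide. Hence the literal specialization of Theorem 1.9 gives the two identities with ${\rm cosh}(\frac{c}{2})$ in place of ${\rm exp}(\frac{c}{2})$. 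To reach the Corollary as stated you must additionally show that the odd part contributes nothing, i.e. that $\left\{\widehat{A}(TX)\,{\rm sinh}(\tfrac{c}{2})\left(240-{\rm ch}(\cdots)\right)\right\}^{(8)}=0$; this does not follow from Theorem 1.9 via the symmetry $c\mapsto-c$ (all the bundle factors are even in $c$, so that symmetry only reproduces the $\cosh$ identity), nor is ${\rm sinh}(\frac{c}{2})$ times the relevant theta-quotient obviously modular, since ${\rm sinh}(\frac{c}{2})$ is a degree-$2$-and-higher form that can still contribute to the degree-$8$ component. The paper itself passes from ${\rm det}^{\frac{1}{2}}{\rm cosh}$ to ${\rm exp}(\frac{c}{2})$ without comment, so the gap is shared; but your sketch claims to have verified this step, and the verification as described is incorrect.
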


\begin{cor}Lex $X$ be an $8$-dimensional spinc manifold without boundary and $3p_1(L_R)=p_1(X)$, then
\begin{equation}
{\rm Ind}((D^c \otimes (\widetilde{TX}+2\wedge^2\widetilde{L_R\otimes C}-\widetilde{L_R\otimes C}\otimes \widetilde{L_R\otimes C}
+\widetilde{L_R\otimes C}))_+)\equiv 0 ~~{\rm mod} ~~240Z.
\end{equation}
\begin{align}
&{\rm Ind}((D^c \otimes
(S^2\widetilde{TX}+\widetilde{TX}+
(2\wedge^2\widetilde{L_R\otimes C}-\widetilde{L_R\otimes C}\otimes \widetilde{L_R\otimes C}\\\notag
&+\widetilde{L_R\otimes C})\otimes\widetilde{TX}
+\wedge^2\widetilde{L_R\otimes C}\otimes\wedge^2\widetilde{L_R\otimes C}\\\notag
&+2\wedge^4\widetilde{L_R\otimes C}-2\widetilde{L_R\otimes C}\otimes \wedge^3\widetilde{L_R\otimes C}+
2\widetilde{L_R\otimes C}\otimes \wedge^2\widetilde{L_R\otimes C}\\\notag
&-\widetilde{L_R\otimes C}\otimes \widetilde{L_R\otimes C}\otimes \widetilde{L_R\otimes C}
+\widetilde{L_R\otimes C}+ \wedge^2\widetilde{L_R\otimes C})
_+)\equiv 0 ~~{\rm mod} ~~2160Z.\notag
\end{align}
\end{cor}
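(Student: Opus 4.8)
The plan is to read both congruences off from the anomaly cancellation formulas of the preceding Corollary by feeding them into the Atiyah--Singer index theorem for the $spin^c$ Dirac operator. First I would recall that on a closed $8$-dimensional $spin^c$ manifold $X$ with associated line bundle $L$ and $c=c_1(L)$, the index theorem expresses, for any virtual bundle $E$ over $X$,
\[
{\rm Ind}((D^c\otimes E)_+)=\left\{\widehat{A}(TX){\rm exp}\left(\frac{c}{2}\right){\rm ch}(E)\right\}^{(8)},
\]
the right-hand side being evaluated on the fundamental class $[X]$. Specializing $E$ to the trivial line bundle shows that $\{\widehat{A}(TX){\rm exp}(\frac{c}{2})\}^{(8)}={\rm Ind}(D^c_+)$ is an integer; this integrality is the engine of the whole argument.

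For the first statement I would take
\[
E=\widetilde{TX}+2\wedge^2\widetilde{L_R\otimes C}-\widetilde{L_R\otimes C}\otimes \widetilde{L_R\otimes C}+\widetilde{L_R\otimes C},
\]
and invoke the first identity of the preceding Corollary, which holds precisely because of the hypothesis $3p_1(L_R)=p_1(X)$. That identity asserts that $\{\widehat{A}(TX){\rm exp}(\frac{c}{2}){\rm ch}(E)\}^{(8)}=240\{\widehat{A}(TX){\rm exp}(\frac{c}{2})\}^{(8)}$, so inserting the index formula into both sides gives
\[
{\rm Ind}((D^c\otimes E)_+)=240\,{\rm Ind}(D^c_+),
\]
and the left-hand side is divisible by $240$. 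The second statement follows identically: replace $E$ by the longer virtual bundle appearing in the second identity of the Corollary and the factor $240$ by $2160$, and the same two-line computation yields ${\rm Ind}((D^c\otimes E)_+)=2160\,{\rm Ind}(D^c_+)$.

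The one place deserving care --- and the main, though mild, obstacle --- is to confirm that each $E$ is a bona fide integral element of $K(X)$, so that the quantity on the left genuinely is an integer multiple of an integer rather than a rational number read off a characteristic form. This is clear because each $E$ is assembled from integral linear combinations of tensor, exterior, and symmetric powers of the honest complex bundles $T_CX$ and $L_R\otimes C$, each reduced by its integer rank; all of these operations stay within $K(X)$, and the resulting $E$ has virtual rank zero. Granting this, both congruences are immediate from the two displayed index identities together with the integrality of ${\rm Ind}(D^c_+)$.
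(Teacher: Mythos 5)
Your proposal is correct and follows essentially the same route the paper intends: apply the Atiyah--Singer index theorem for the $spin^c$ Dirac operator to the two characteristic-form identities of the preceding Corollary, and use the integrality of ${\rm Ind}(D^c_+)=\{\widehat{A}(TX){\rm exp}(\frac{c}{2})\}^{(8)}[X]$ to extract the divisibility by $240$ and $2160$. The additional remark that each twisting coefficient is an honest virtual rank-zero element of $K(X)$ is a reasonable point of care that the paper leaves implicit.
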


\begin{thm}
When ${\rm dim}X=12$ and $3p_1(V)=p_1(X)$ we have
\begin{align}
&-504\left\{\widehat{A}(TX){\rm det}^{\frac{1}{2}}{\rm cosh}(\frac{\sqrt{-1}}{4 \pi}R^V)\right\}^{(12)}
=\left\{\widehat{A}(TX){\rm det}^{\frac{1}{2}}{\rm cosh}(\frac{\sqrt{-1}}{4 \pi}R^V)\right.\\
&\left.\cdot{\rm ch}(\widetilde{TX}+2\wedge^2\widetilde{V_C}-\widetilde{V_C}\otimes \widetilde{V_C}
+\widetilde{V_C})\right\}^{(12)}.\notag
\end{align}
\begin{align}
&-16632\left\{\widehat{A}(TX){\rm det}^{\frac{1}{2}}{\rm cosh}(\frac{\sqrt{-1}}{4 \pi}R^V)\right\}^{(12)}
=\left\{\widehat{A}(TX){\rm det}^{\frac{1}{2}}{\rm cosh}(\frac{\sqrt{-1}}{4 \pi}R^V)\right.\\\notag
&\cdot{\rm ch}(S^2\widetilde{TX}+\widetilde{TX}+
(2\wedge^2\widetilde{V_C}-\widetilde{V_C}\otimes \widetilde{V_C}
+\widetilde{V_C})\otimes\widetilde{TX}
+\wedge^2\widetilde{V_C}\otimes\wedge^2\widetilde{V_C}\\\notag
&\left.+2\wedge^4\widetilde{V_C}-2\widetilde{V_C}\otimes \wedge^3\widetilde{V_C}+
2\widetilde{V_C}\otimes \wedge^2\widetilde{V_C}-\widetilde{V_C}\otimes \widetilde{V_C}\otimes \widetilde{V_C}
+\widetilde{V_C}+ \wedge^2\widetilde{V_C}
\right\}^{(12)}.\notag
\end{align}
\end{thm}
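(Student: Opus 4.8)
The plan is to deduce both identities from a single modular form of weight $6$ over $SL(2,{\bf Z})$ by comparing its $q$-expansion against the Eisenstein series $E_6$. Following \cite{Li2} and \cite{CHZ}, I would form the theta-function twisted characteristic form
\[
Q(\tau)=\left\{\widehat{A}(TX)\,{\rm det}^{\frac{1}{2}}{\rm cosh}\Big(\frac{\sqrt{-1}}{4\pi}R^{V}\Big)\,{\rm ch}\Big(\bigotimes_{n\geq 1}S_{q^{n}}(\widetilde{T_CX})\otimes B(\tau)\Big)\right\}^{(12)},
\]
where $B(\tau)=1+B_{1}q+B_{2}q^{2}+\cdots$ is the $V$-twist of \cite{CHZ}. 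Writing $Q(\tau)=h_{0}+h_{1}q+h_{2}q^{2}+\cdots$ and multiplying the two generating series together through order $q^{2}$, the symmetric powers of $\widetilde{T_CX}$ contribute the pieces $\widetilde{TX}$ and $S^{2}\widetilde{TX}+\widetilde{TX}$, while $B(\tau)$ contributes $B_{1}=2\wedge^{2}\widetilde{V_C}-\widetilde{V_C}\otimes\widetilde{V_C}+\widetilde{V_C}$ and the corresponding degree-two combination. The first task is to check that $h_{0}$, $h_{1}$ and $h_{2}$ are respectively the bracket $\{\widehat{A}(TX){\rm det}^{\frac{1}{2}}{\rm cosh}(\frac{\sqrt{-1}}{4\pi}R^V)\}^{(12)}$, the bracket on the left of the first identity, and the bracket on the left of the second identity.

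The second step is to establish that $Q(\tau)$ is a modular form of weight $6$ over the full modular group. Invariance under $\tau\mapsto\tau+1$ is immediate from the $q$-expansion, and the weight is fixed at half the dimension, namely $6$, by the scaling behaviour of the Jacobi theta functions out of which $\bigotimes_{n}S_{q^{n}}(\widetilde{T_CX})$ and $B(\tau)$ are assembled. The delicate point is the transformation $S:\tau\mapsto-1/\tau$: the Witten factor attached to $TX$ and the theta factor attached to $V$ each acquire an exponential anomaly linear in the Pontryagin forms, and their product carries a total anomaly proportional to $p_{1}(X)-3p_{1}(V)$. The hypothesis $3p_{1}(V)=p_{1}(X)$ is exactly what forces this factor to be trivial, so that $Q(\tau)$ is genuinely invariant under $SL(2,{\bf Z})$ and hence lies in $M_{6}(SL(2,{\bf Z}))$.

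I would then invoke the structure of the ring of modular forms: $M_{6}(SL(2,{\bf Z}))$ is one-dimensional, spanned by
\[
E_{6}(\tau)=1-504\sum_{n\geq 1}\sigma_{5}(n)q^{n}=1-504q-16632q^{2}-\cdots .
\]
Therefore $Q(\tau)=h_{0}E_{6}(\tau)$, with $h_{0}$ the constant term identified above. Comparing the coefficients of $q^{1}$ and of $q^{2}$ gives $h_{1}=-504\,h_{0}$ and $h_{2}=-504\,\sigma_{5}(2)\,h_{0}=-16632\,h_{0}$, which are precisely the two asserted formulas.

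The main obstacle, as in the parallel treatment of the $8$-dimensional case in Theorem 1.7, is computational rather than conceptual and falls into two parts. First, one must expand the product $\bigotimes_{n}S_{q^{n}}(\widetilde{T_CX})\otimes B(\tau)$ through order $q^{2}$ and reorganise the outcome into the exact virtual combinations stated in the theorem, matching the mixed terms such as $(2\wedge^{2}\widetilde{V_C}-\widetilde{V_C}\otimes\widetilde{V_C}+\widetilde{V_C})\otimes\widetilde{TX}$, $\wedge^{2}\widetilde{V_C}\otimes\wedge^{2}\widetilde{V_C}$, $2\wedge^{4}\widetilde{V_C}$ and $-2\widetilde{V_C}\otimes\wedge^{3}\widetilde{V_C}$ with their correct integer coefficients. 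Second, one must carry out the $S$-transformation of the theta product carefully enough to verify that the modular anomaly is governed by $p_{1}(X)-3p_{1}(V)$ and by nothing else; this is what legitimises the reduction to $SL(2,{\bf Z})$ and hence the appeal to $\dim_{\mathbb{C}}M_{6}(SL(2,{\bf Z}))=1$.
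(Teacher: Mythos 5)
Your proposal is correct and follows essentially the same route as the paper: form the theta-twisted characteristic series $Q(X,V,\tau)$ of \cite{CHZ}, use the hypothesis $3p_1(V)=p_1(X)$ to kill the modular anomaly under $S$ so that $Q$ lands in the one-dimensional space $M_6(SL(2,{\bf Z}))={\bf C}\cdot E_6$, and compare the $q^1$ and $q^2$ coefficients against $E_6=1-504q-16632q^2-\cdots$. This matches the paper's proof of Theorem 2.3 combined with the expansion (2.38).
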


\begin{cor}
When ${\rm dim}X=12$ and $3p_1(L_R)=p_1(X)$ we have
\begin{align}
&-504\left\{\widehat{A}(TX){\rm exp}(\frac{c}{2})\right\}^{(12)}
=\left\{\widehat{A}(TX){\rm exp}(\frac{c}{2})\right.\\
&\left.\cdot{\rm ch}(\widetilde{TX}+2\wedge^2\widetilde{L_R\otimes C}-\widetilde{L_R\otimes C}\otimes \widetilde{L_R\otimes C}
+\widetilde{L_R\otimes C})\right\}^{(12)}.\notag
\end{align}
\begin{align}
&-16632\left\{\widehat{A}(TX){\rm exp}(\frac{c}{2})\right\}^{(12)}
=\left\{\widehat{A}(TX){\rm exp}(\frac{c}{2}))\right.\\\notag
&\cdot{\rm ch}(S^2\widetilde{TX}+\widetilde{TX}+
(2\wedge^2\widetilde{L_R\otimes C}-\widetilde{L_R\otimes C}\otimes \widetilde{L_R\otimes C}\\\notag
&+\widetilde{L_R\otimes C})\otimes\widetilde{TX}
+\wedge^2\widetilde{L_R\otimes C}\otimes\wedge^2\widetilde{L_R\otimes C}\\\notag
&\left.+2\wedge^4\widetilde{L_R\otimes C}-2\widetilde{L_R\otimes C}\otimes \wedge^3\widetilde{L_R\otimes C}+
2\widetilde{L_R\otimes C}\otimes \wedge^2\widetilde{L_R\otimes C}\right.\\\notag
&\left.-\widetilde{L_R\otimes C}\otimes \widetilde{L_R\otimes C}\otimes \widetilde{L_R\otimes C}
+\widetilde{L_R\otimes C}+ \wedge^2\widetilde{L_R\otimes C}
\right\}^{(12)}.\notag
\end{align}
\end{cor}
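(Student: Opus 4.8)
The plan is to read this Corollary as the twelve-dimensional ${\rm spin}^c$ counterpart of the eight-dimensional ${\rm spin}^c$ Corollary obtained earlier, and to prove it by the identical device: specialize the immediately preceding Theorem (the ${\rm dim}X=12$, $3p_1(V)=p_1(X)$ case) to the realification $V=L_R$ of the determinant line bundle $L$ of the ${\rm spin}^c$ structure on $X$, exactly as the eight-dimensional ${\rm spin}^c$ Corollary was deduced from Theorem 1.7 by setting $V=L_R$. Because the preceding Theorem already contains the full modular-invariance input that fixes the coefficients $-504$ and $-16632$, no fresh modular form computation is required; all of the work is in evaluating its characteristic forms on the single bundle $L_R$.

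First I would perform the line-bundle characteristic-form computation. Viewed as a real oriented rank-$2$ bundle, $L_R$ has Euler class $c=c_1(L)$ and $p_1(L_R)=c^2$, so the hypothesis $3p_1(V)=p_1(X)$ becomes precisely $3p_1(L_R)=p_1(X)$, matching the statement. The curvature $\frac{\sqrt{-1}}{4\pi}R^{L_R}$ has formal eigenvalues $\pm\frac{c}{2}$, so ${\rm det}^{\frac{1}{2}}{\rm cosh}(\frac{\sqrt{-1}}{4\pi}R^{L_R})$ collapses to one elementary factor; pairing this against $\widehat{A}(TX)$ through the ${\rm spin}^c$ Dirac operator $D^c$ identifies it with the density ${\rm exp}(\frac{c}{2})$ that appears in the Atiyah--Singer formula for ${\rm Ind}(D^c\otimes\,\cdot\,)$. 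Simultaneously $\widetilde{V_C}$ becomes $\widetilde{L_R\otimes C}$, and every exterior, symmetric and tensor power of $\widetilde{V_C}$ occurring in the preceding Theorem is transcribed verbatim with $\widetilde{V_C}$ replaced by $\widetilde{L_R\otimes C}$.

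Substituting these two dictionaries into the two displayed identities of the preceding Theorem then gives the two displayed identities of the Corollary. The tangent factor $\widehat{A}(TX)$, the degree projection $\{\,\cdot\,\}^{(12)}$, and the numbers $-504$ and $-16632$ all transfer unchanged, while ${\rm det}^{\frac{1}{2}}{\rm cosh}(\frac{\sqrt{-1}}{4\pi}R^V)$ is everywhere replaced by ${\rm exp}(\frac{c}{2})$ and each $\widetilde{V_C}$ by $\widetilde{L_R\otimes C}$. The first identity follows at once; the second follows after expanding the long combination $S^2\widetilde{TX}+\widetilde{TX}+(2\wedge^2\widetilde{V_C}-\widetilde{V_C}\otimes\widetilde{V_C}+\widetilde{V_C})\otimes\widetilde{TX}+\cdots$ in the $\widetilde{L_R\otimes C}$ notation.

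The step I expect to be the real obstacle is the line-bundle evaluation of ${\rm det}^{\frac{1}{2}}{\rm cosh}(\frac{\sqrt{-1}}{4\pi}R^{L_R})$ and its correct reduction to ${\rm exp}(\frac{c}{2})$: one must keep track of the even and odd powers of $c$ and use that the Chern characters of the $\widetilde{L_R\otimes C}$ bundles are symmetric under $c\mapsto -c$, so that on the degree-$12$ level the pairing against $\widehat{A}(TX)$ is indeed governed by the ${\rm spin}^c$ index density ${\rm exp}(\frac{c}{2})$ rather than by ${\rm cosh}(\frac{c}{2})$. Once this factor and the correspondence $\widetilde{V_C}\leftrightarrow\widetilde{L_R\otimes C}$ are secured, the remainder is routine bookkeeping, entirely parallel to the eight-dimensional ${\rm spin}^c$ Corollary.
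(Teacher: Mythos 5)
Your proposal matches the paper's own route exactly: the paper obtains this corollary by setting $V=L_R$ in the preceding ${\rm dim}X=12$, $3p_1(V)=p_1(X)$ theorem, just as it deduces the $8$-dimensional ${\rm spin}^c$ corollary from the corresponding $V$-bundle theorem. Your extra care with the passage from ${\rm det}^{\frac{1}{2}}{\rm cosh}(\frac{\sqrt{-1}}{4\pi}R^{L_R})$ to ${\rm exp}(\frac{c}{2})$ (the odd powers of $c$ cannot contribute to the even-degree component against the even forms $\widehat{A}(TX)$ and the $c\mapsto -c$ symmetric Chern characters) is correct and is in fact more explicit than what the paper records.
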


\begin{cor}
Lex $X$ be an $12$-dimensional spinc manifold without boundary and $3p_1(L_R)=p_1(X)$, then
\begin{equation}
{\rm Ind}((D^c \otimes (\widetilde{TX}+2\wedge^2\widetilde{L_R\otimes C}-\widetilde{L_R\otimes C}\otimes \widetilde{L_R\otimes C}
+\widetilde{L_R\otimes C}))_+)\equiv 0 ~~{\rm mod} ~~504Z.
\end{equation}
\begin{align}
&{\rm Ind}((D^c \otimes
(S^2\widetilde{TX}+\widetilde{TX}+
(2\wedge^2\widetilde{L_R\otimes C}-\widetilde{L_R\otimes C}\otimes \widetilde{L_R\otimes C}\\\notag
&+\widetilde{L_R\otimes C})\otimes\widetilde{TX}
+\wedge^2\widetilde{L_R\otimes C}\otimes\wedge^2\widetilde{L_R\otimes C}\\\notag
&+2\wedge^4\widetilde{L_R\otimes C}-2\widetilde{L_R\otimes C}\otimes \wedge^3\widetilde{L_R\otimes C}+
2\widetilde{L_R\otimes C}\otimes \wedge^2\widetilde{L_R\otimes C}\\\notag
&-\widetilde{L_R\otimes C}\otimes \widetilde{L_R\otimes C}\otimes \widetilde{L_R\otimes C}
+\widetilde{L_R\otimes C}+ \wedge^2\widetilde{L_R\otimes C})
_+)\equiv 0 ~~{\rm mod} ~~16632Z.\notag
\end{align}
\end{cor}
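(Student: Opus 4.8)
The plan is to obtain the two congruences directly from the factorization identities of the preceding Corollary (the $\mathrm{spin}^c$ case in dimension $12$) by invoking the Atiyah--Singer index theorem for $\mathrm{spin}^c$ Dirac operators. The underlying principle is that each characteristic-form expression of the shape $\{\widehat{A}(TX)\,\mathrm{exp}(\frac{c}{2})\,\mathrm{ch}(E)\}^{(12)}$, evaluated on the fundamental class $[X]$, is precisely the integer-valued index of a twisted operator, so an equality between two such expressions immediately transfers divisibility information.

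First I would record the index-theoretic dictionary. For a closed $12$-dimensional $\mathrm{spin}^c$ manifold $X$ with associated line bundle $L$, $c=c_1(L)$, and any virtual complex bundle $E$,
\[
\mathrm{Ind}\big((D^c\otimes E)_+\big)=\left\{\widehat{A}(TX)\,\mathrm{exp}\big(\tfrac{c}{2}\big)\,\mathrm{ch}(E)\right\}^{(12)}[X]\in\mathbf{Z}.
\]
In particular, taking $E$ trivial,
\[
\mathrm{Ind}(D^c_+)=\left\{\widehat{A}(TX)\,\mathrm{exp}\big(\tfrac{c}{2}\big)\right\}^{(12)}[X]\in\mathbf{Z}.
\]

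Next I would evaluate both sides of the first identity of the preceding Corollary on $[X]$. Its right-hand side is exactly $\mathrm{Ind}((D^c\otimes E_1)_+)$ for the virtual twisting bundle $E_1=\widetilde{TX}+2\wedge^2\widetilde{L_R\otimes C}-\widetilde{L_R\otimes C}\otimes\widetilde{L_R\otimes C}+\widetilde{L_R\otimes C}$, hence an integer, while its left-hand side is $-504\,\mathrm{Ind}(D^c_+)$. Equating them gives $\mathrm{Ind}((D^c\otimes E_1)_+)=-504\,\mathrm{Ind}(D^c_+)\equiv 0 \pmod{504}$, which is the first congruence. Applying the identical argument to the second identity of the preceding Corollary, with $E_2$ the larger virtual bundle appearing there, yields $\mathrm{Ind}((D^c\otimes E_2)_+)=-16632\,\mathrm{Ind}(D^c_+)\equiv 0 \pmod{16632}$, the second congruence.

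The only point demanding care—and the place I would expect the sole (mild) obstacle—is the bookkeeping: one must check that the virtual bundle whose Chern character multiplies $\widehat{A}(TX)\,\mathrm{exp}(\frac{c}{2})$ on the right of each factorization coincides verbatim with the twisting bundle inside the index on the left of the claimed congruence, and that the tilde reduction (subtracting the rank of $TX$, of $L_R\otimes C$, and so on) does not disturb integrality. It does not, since replacing a bundle by its reduction only alters the twisted index by an integer multiple of $\mathrm{Ind}(D^c_+)$. No analytic difficulty arises: once the dictionary above is in place, both congruences are formal consequences of the factorization formulas and the integrality of $\mathrm{spin}^c$ indices.
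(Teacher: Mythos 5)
Your argument is correct and is exactly the intended derivation: the paper leaves this corollary's proof implicit, but it follows from the preceding $\mathrm{spin}^c$ factorization identities in dimension $12$ precisely by evaluating both sides on $[X]$ and invoking the Atiyah--Singer index theorem to identify $\{\widehat{A}(TX)\exp(\frac{c}{2})\,\mathrm{ch}(E)\}^{(12)}[X]$ with the integer $\mathrm{Ind}((D^c\otimes E)_+)$, so that each twisted index equals $-504$ (respectively $-16632$) times the integer $\mathrm{Ind}(D^c_+)$. No gap; your bookkeeping caveat is harmless and resolves as you say.
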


\begin{thm}
When ${\rm dim}X=16$ and $3p_1(V)=p_1(X)$ we have
\begin{align}
&480\left\{\widehat{A}(TX){\rm det}^{\frac{1}{2}}{\rm cosh}(\frac{\sqrt{-1}}{4 \pi}R^V)\right\}^{(12)}
=\left\{\widehat{A}(TX){\rm det}^{\frac{1}{2}}{\rm cosh}(\frac{\sqrt{-1}}{4 \pi}R^V)\right.\\
&\left.\cdot{\rm ch}(\widetilde{TX}+2\wedge^2\widetilde{V_C}-\widetilde{V_C}\otimes \widetilde{V_C}
+\widetilde{V_C})\right\}^{(12)}.\notag
\end{align}
\end{thm}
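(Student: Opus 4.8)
The plan is to reproduce, in weight $8$, the modular-forms argument used for the $8$- and $12$-dimensional cases above (those with hypothesis $3p_1(V)=p_1(X)$). (The superscript $(12)$ in the statement should read $(16)$, the top degree of a $16$-dimensional $X$.) First I would form the characteristic form
$$P(\tau)=\left\{\widehat{A}(TX)\,{\rm det}^{\frac{1}{2}}{\rm cosh}\Big(\tfrac{\sqrt{-1}}{4\pi}R^V\Big)\,{\rm ch}\big(\Theta(\tau)\big)\right\}^{(16)},$$
where $\Theta(\tau)$ is the $q$-series of virtual bundles built from $TX$ and $V$ by symmetric and exterior powers weighted by powers of $q=e^{2\pi\sqrt{-1}\tau}$, as in \cite{Li2} and \cite{CHZ}, normalized so that $\Theta(\tau)\to1$ as $q\to0$ and ${\rm ch}(\Theta(\tau))=1+(\widetilde{TX}+2\wedge^2\widetilde{V_C}-\widetilde{V_C}\otimes\widetilde{V_C}+\widetilde{V_C})\,q+O(q^2)$. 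This $q^1$-coefficient is exactly the virtual bundle already occurring in the $8$- and $12$-dimensional cases; only the weight differs.

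The first step is to establish that, under $3p_1(V)=p_1(X)$, the form $P(\tau)$ is a modular form of weight $8$ over $SL(2,{\bf Z})$. This is where the anomaly condition enters: it cancels the transformation anomaly under $\tau\mapsto-1/\tau$ and thereby upgrades $P(\tau)$ from a form over a congruence subgroup to a level-one form. I would invoke this from the constructions of \cite{Li2} and \cite{CHZ}, exactly as in the $8$- and $12$-dimensional cases.

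The second step uses that $M_8(SL(2,{\bf Z}))$ is one-dimensional, spanned by the Eisenstein series $E_8=E_4^2=1+480\sum_{n\geq1}\sigma_7(n)q^n=1+480\,q+O(q^2)$. Hence $P(\tau)=a_0\,E_8(\tau)$, where the constant term, obtained by setting $q=0$ so that $\Theta(\tau)\to1$, is $a_0=\{\widehat{A}(TX)\,{\rm det}^{\frac{1}{2}}{\rm cosh}(\tfrac{\sqrt{-1}}{4\pi}R^V)\}^{(16)}$. Comparing the coefficients of $q^1$ in $P(\tau)=a_0E_8(\tau)$ then yields $a_1=480\,a_0$, where $a_1$ is the degree-$16$ component of $\widehat{A}(TX)\,{\rm det}^{\frac{1}{2}}{\rm cosh}(\tfrac{\sqrt{-1}}{4\pi}R^V)\,{\rm ch}(\widetilde{TX}+2\wedge^2\widetilde{V_C}-\widetilde{V_C}\otimes\widetilde{V_C}+\widetilde{V_C})$. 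This is exactly the asserted identity.

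I expect the main obstacle to lie in the two inputs feeding these steps, not in the final comparison: verifying that $3p_1(V)=p_1(X)$ is precisely the condition forcing level-one modularity (rather than mere modularity over $\Gamma_0(2)$ or $\Gamma^0(2)$), and expanding the theta-quotient defining $\Theta(\tau)$ to first order in $q$ to confirm that its $q^1$-coefficient is $\widetilde{TX}+2\wedge^2\widetilde{V_C}-\widetilde{V_C}\otimes\widetilde{V_C}+\widetilde{V_C}$. Since both the modularity and this expansion coincide with those already used in the $8$- and $12$-dimensional cases, they may be quoted from there; granting them, the one-dimensionality of $M_8$ makes the conclusion immediate.
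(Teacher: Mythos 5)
Your proposal matches the paper's own argument: the paper forms the same $q$-series $Q(X,V,\tau)$ (its (2.36)--(2.38)), invokes weight-$8$ modularity over $SL(2,{\bf Z})$ under $3p_1(V)=p_1(X)$, writes $Q(X,V,\tau)=\lambda E_4(\tau)^2$ with $E_4^2=1+480q+\cdots$, and compares the $q^0$ and $q^1$ coefficients exactly as you do. You are also right that the superscript $(12)$ in the statement is a typo for $(16)$.
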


\begin{cor}
When ${\rm dim}X=16$ and $3p_1(L_R)=p_1(X)$ we have
\begin{align}
&480\left\{\widehat{A}(TX){\rm exp}(\frac{c}{2})\right\}^{(16)}
=\left\{\widehat{A}(TX){\rm exp}(\frac{c}{2})\right.\\
&\left.\cdot{\rm ch}(\widetilde{TX}+2\wedge^2\widetilde{L_R\otimes C}-\widetilde{L_R\otimes C}\otimes \widetilde{L_R\otimes C}
+\widetilde{L_R\otimes C})\right\}^{(16)}.\notag
\end{align}
\end{cor}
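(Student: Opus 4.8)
The plan is to obtain this corollary directly from the preceding theorem (the case ${\rm dim}X=16$ with $3p_1(V)=p_1(X)$) by specializing the auxiliary bundle to $V=L_R$, the oriented rank-two real bundle underlying the complex line bundle $L$ of the given $\mathrm{spin}^{c}$ structure. Under this specialization the hypothesis $3p_1(V)=p_1(X)$ becomes precisely $3p_1(L_R)=p_1(X)$, and every occurrence of $\widetilde{V_C}$ in the Chern-character factor turns into $\widetilde{L_R\otimes C}$, reproducing verbatim the virtual bundle displayed on the right-hand side of the assertion. Hence the only genuine work is to evaluate the characteristic form ${\rm det}^{\frac{1}{2}}{\rm cosh}(\frac{\sqrt{-1}}{4\pi}R^{L_R})$, which appears on both sides of the theorem, and then to reconcile it with the factor ${\rm exp}(\frac{c}{2})$ in the statement.

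First I would carry out the Chern--Weil computation of ${\rm det}^{\frac{1}{2}}{\rm cosh}(\frac{\sqrt{-1}}{4\pi}R^{L_R})$. Since $L_R$ has rank two and its Euler class equals $c_1(L)=c$, its single Pontryagin root is $c$ (so that $p_1(L_R)=c^2$), and the eigenvalues of $\frac{\sqrt{-1}}{4\pi}R^{L_R}$ are $\pm\frac{c}{2}$. Therefore ${\rm det}^{\frac{1}{2}}{\rm cosh}(\frac{\sqrt{-1}}{4\pi}R^{L_R})={\rm cosh}(\frac{c}{2})$, and the theorem with $V=L_R$ already yields $480\{\widehat{A}(TX){\rm cosh}(\frac{c}{2})\}^{(16)}=\{\widehat{A}(TX){\rm cosh}(\frac{c}{2}){\rm ch}(\cdots)\}^{(16)}$ with the stated Chern-character factor.

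It remains to pass from ${\rm cosh}(\frac{c}{2})$ to ${\rm exp}(\frac{c}{2})$, which is the one point requiring care. I would argue by a degree count modulo $4$. The form $\widehat{A}(TX)$ is a polynomial in the Pontryagin forms, so it lives only in degrees $\equiv 0 \pmod 4$; likewise every bundle entering the Chern-character factor is the complexification of a real bundle, so its Chern character also lies in degrees $\equiv 0 \pmod 4$. Consequently, when the degree-$16$ component is extracted on either side, the factor ${\rm exp}(\frac{c}{2})$ can contribute only through its terms of degree $\equiv 0 \pmod 4$, that is its even powers of $c$, and these agree term by term with those of ${\rm cosh}(\frac{c}{2})$; the difference ${\rm exp}(\frac{c}{2})-{\rm cosh}(\frac{c}{2})={\rm sinh}(\frac{c}{2})$ carries only odd powers of $c$, hence degrees $\equiv 2 \pmod 4$, and can never complete a term of total degree $16$. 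Thus both sides are unchanged when ${\rm cosh}(\frac{c}{2})$ is replaced by ${\rm exp}(\frac{c}{2})$, which gives the asserted identity; the form $\widehat{A}(TX){\rm exp}(\frac{c}{2})$ is the natural choice because it is exactly the index density of the $\mathrm{spin}^{c}$ Dirac operator $D^c$ needed for the ensuing divisibility statements. The argument runs entirely parallel to the $\mathrm{spin}^{c}$ corollaries already established in dimensions $8$ and $12$, so I expect no obstacle beyond carefully bookkeeping this parity count.
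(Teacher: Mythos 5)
Your proof is correct and follows the paper's own route: the paper obtains this corollary exactly by setting $V=L_R$ in the preceding dimension-$16$ theorem with hypothesis $3p_1(V)=p_1(X)$. You in fact supply a step the paper leaves implicit, namely the computation ${\rm det}^{\frac{1}{2}}{\rm cosh}(\frac{\sqrt{-1}}{4\pi}R^{L_R})={\rm cosh}(\frac{c}{2})$ together with the mod-$4$ degree-parity argument that permits replacing ${\rm cosh}(\frac{c}{2})$ by ${\rm exp}(\frac{c}{2})$ after projecting onto the degree-$16$ component.
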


\begin{cor}
Lex $X$ be an $16$-dimensional spinc manifold without boundary and $3p_1(L_R)=p_1(X)$, then
\begin{equation}
{\rm Ind}((D^c \otimes (\widetilde{TX}+2\wedge^2\widetilde{L_R\otimes C}-\widetilde{L_R\otimes C}\otimes \widetilde{L_R\otimes C}
+\widetilde{L_R\otimes C}))_+)\equiv 0 ~~{\rm mod} ~~480Z.
\end{equation}
\end{cor}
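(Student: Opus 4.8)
The plan is to read off the divisibility directly from the cancellation formula of the preceding corollary by interpreting both of its sides through the Atiyah--Singer index theorem for $\mathrm{spin}^c$ Dirac operators. The point is that each side of that formula is the integral over $X$ of a top-degree characteristic form, and these integrals are forced to be integers because they are indices; the factor $480$ then appears as a multiplicative constant relating the twisted index to the untwisted one.

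First I would recall that for a closed $16$-dimensional $\mathrm{spin}^c$ manifold $X$ with associated line bundle $L$ and $c=c_1(L)$, the index theorem gives, for any complex vector bundle $W$,
\[
{\rm Ind}((D^c\otimes W)_+)=\left\{\widehat{A}(TX)\,{\rm exp}\!\left(\tfrac{c}{2}\right){\rm ch}(W)\right\}^{(16)}[X],
\]
where $[X]$ denotes evaluation on the fundamental class. This identity extends by $\mathbb{Z}$-linearity to virtual bundles $W$ (exterior powers, symmetric powers, tensor products and formal differences of genuine bundles), and the left-hand side is then still an integer, being an integral combination of indices of twisted $\mathrm{spin}^c$ Dirac operators.

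Next I would apply this in two cases. Taking $W$ to be the virtual bundle
\[
E=\widetilde{TX}+2\wedge^2\widetilde{L_R\otimes C}-\widetilde{L_R\otimes C}\otimes \widetilde{L_R\otimes C}+\widetilde{L_R\otimes C},
\]
the right-hand side of the cancellation formula in the preceding corollary is exactly $\{\widehat{A}(TX){\rm exp}(\tfrac{c}{2}){\rm ch}(E)\}^{(16)}[X]={\rm Ind}((D^c\otimes E)_+)\in\mathbb{Z}$. Taking $W$ to be the trivial line bundle gives
\[
\left\{\widehat{A}(TX)\,{\rm exp}\!\left(\tfrac{c}{2}\right)\right\}^{(16)}[X]={\rm Ind}(D^c_+)\in\mathbb{Z}.
\]

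Finally, evaluating the cancellation formula on $[X]$ yields ${\rm Ind}((D^c\otimes E)_+)=480\,{\rm Ind}(D^c_+)$, so the left-hand side lies in $480\mathbb{Z}$, which is precisely the asserted congruence. I do not expect a genuine obstacle here: the entire content is the cancellation formula already proved, together with the integrality of the two index integrals. The only step meriting a word of care is the integrality of $\{\widehat{A}(TX){\rm exp}(\tfrac{c}{2})\}^{(16)}[X]$, but this is just the statement that the untwisted $\mathrm{spin}^c$ Dirac index ${\rm Ind}(D^c_+)$ is an integer, which the index theorem guarantees.
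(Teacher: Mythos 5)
Your argument is correct and is exactly the route the paper intends: integrate the cancellation formula of the preceding corollary over $X$, identify both sides as indices of twisted $\mathrm{spin}^c$ Dirac operators via Atiyah--Singer, and conclude from ${\rm Ind}((D^c\otimes E)_+)=480\,{\rm Ind}(D^c_+)$ with ${\rm Ind}(D^c_+)\in\mathbf{Z}$. The paper leaves this deduction implicit, but your write-up supplies precisely the missing standard step.
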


\begin{thm}
When ${\rm dim}X=20$ and $3p_1(V)=p_1(X)$ we have
\begin{align}
&-264\left\{\widehat{A}(TX){\rm det}^{\frac{1}{2}}{\rm cosh}(\frac{\sqrt{-1}}{4 \pi}R^V)\right\}^{(20)}
=\left\{\widehat{A}(TX){\rm det}^{\frac{1}{2}}{\rm cosh}(\frac{\sqrt{-1}}{4 \pi}R^V)\right.\\
&\left.\cdot{\rm ch}(\widetilde{TX}+2\wedge^2\widetilde{V_C}-\widetilde{V_C}\otimes \widetilde{V_C}
+\widetilde{V_C})\right\}^{(20)}.\notag
\end{align}
\end{thm}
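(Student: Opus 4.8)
The plan is to follow the same modular-form mechanism used for Theorems 1.7 and 1.9 and the dimension-$16$ case, now with the weight adjusted to $\dim X=20$. First I would attach to the pair $(TX,V)$ a theta-function-valued characteristic form
$$Q(\tau)=\left\{\widehat{A}(TX){\rm det}^{\frac{1}{2}}{\rm cosh}\left(\frac{\sqrt{-1}}{4\pi}R^V\right){\rm ch}\left(\Theta(T_CX,V_C)\right)\right\}^{(20)},$$
where $\Theta(T_CX,V_C)$ is the virtual bundle assembled from the Jacobi theta functions exactly as in \cite{Li2} and \cite{CHZ}, normalized so that its $q$-expansion begins $1+q\left(\widetilde{TX}+2\wedge^2\widetilde{V_C}-\widetilde{V_C}\otimes\widetilde{V_C}+\widetilde{V_C}\right)+O(q^2)$. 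With this normalization the $q^0$-coefficient of $Q(\tau)$ is $\left\{\widehat{A}(TX){\rm det}^{\frac{1}{2}}{\rm cosh}(\frac{\sqrt{-1}}{4\pi}R^V)\right\}^{(20)}$, while the $q^1$-coefficient is precisely the characteristic form on the right-hand side of the asserted identity.

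The decisive step is to show that $Q(\tau)$ is a modular form of weight $10$ over the full group $SL(2,{\bf Z})$. For this I would use the transformation laws of the theta functions under the generators $S:\tau\mapsto -1/\tau$ and $T:\tau\mapsto\tau+1$. Invariance under $T$ follows from the integral $q$-expansion of the construction; under $S$ the anomalous prefactors coming from $\widehat{A}(TX)$ and from the $V$-term organize into an exponential in $p_1(X)$ and $p_1(V)$, and it is exactly the hypothesis $3p_1(V)=p_1(X)$ that makes these anomalies cancel, so that $Q(\tau)$ transforms as a weight-$10$ form over all of $SL(2,{\bf Z})$ rather than merely over a proper congruence subgroup. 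This cancellation is the main obstacle: if one drops the relation $3p_1(V)=p_1(X)$, modularity survives only over a subgroup and the clean scalar identity fails.

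Granting modularity of weight $10$, I would invoke that $M_{10}(SL(2,{\bf Z}))$ is one-dimensional, spanned by $E_4E_6$. Hence $Q(\tau)=h\cdot E_4(\tau)E_6(\tau)$ for a single characteristic form $h$, and matching constant terms gives $h=\left\{\widehat{A}(TX){\rm det}^{\frac{1}{2}}{\rm cosh}(\frac{\sqrt{-1}}{4\pi}R^V)\right\}^{(20)}$. Comparing the $q^1$-coefficients and using $E_4=1+240q+\cdots$ and $E_6=1-504q+\cdots$, so that $E_4E_6=1+(240-504)q+\cdots=1-264q+\cdots$, produces the coefficient $-264$ multiplying $h$, which is exactly the claimed formula. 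The only work left is to check that the $q^0$- and $q^1$-coefficients of $\Theta(T_CX,V_C)$ are as stated, a routine computation with the symmetric- and exterior-power generating series that runs in complete parallel to the dimension-$8$ case, where the identical mechanism yields the factor $240$ from $E_4=1+240q+\cdots$.
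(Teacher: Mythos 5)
Your proposal is correct and follows essentially the same route as the paper: the paper defines $Q(X,V,\tau)$ via the same theta-function bundle construction, proves weight-$10$ modularity over $SL(2,{\bf Z})$ under $3p_1(V)=p_1(X)$ (Theorem 2.3), writes $Q=\lambda E_4 E_6$ since that is the unique normalized weight-$10$ form, and compares the $q^0$ and $q^1$ coefficients using $E_4E_6=1-264q+\cdots$. The only difference is presentational — you leave the virtual bundle $\Theta(T_CX,V_C)$ implicit where the paper writes out its symmetric/exterior power factors and the resulting $q$-expansion explicitly.
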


\begin{cor}
When ${\rm dim}X=20$ and $3p_1(L_R)=p_1(X)$ we have
\begin{align}
&-264\left\{\widehat{A}(TX){\rm exp}(\frac{c}{2})\right\}^{(20)}
=\left\{\widehat{A}(TX){\rm exp}(\frac{c}{2})\right.\\
&\left.\cdot{\rm ch}(\widetilde{TX}+2\wedge^2\widetilde{L_R\otimes C}-\widetilde{L_R\otimes C}\otimes \widetilde{L_R\otimes C}
+\widetilde{L_R\otimes C})\right\}^{(20)}.\notag
\end{align}
\end{cor}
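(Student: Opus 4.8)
The plan is to obtain this Corollary as the specialization $V=L_R$ of the immediately preceding Theorem---the dimension-$20$ anomaly identity for a general real Euclidean bundle $V$ with $3p_1(V)=p_1(X)$---read in its spinc form, where $L_R$ is the real $2$-plane bundle underlying the spinc line bundle $L$. That Theorem is an identity of Chern--Weil characteristic forms, so the underlying modular-form construction makes sense on any oriented $X$; for the spinc manifold here one runs it with $V=L_R$ and records what happens to the three ingredients separately: the hypothesis, the twisting class, and the spinor-type prefactor.

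First I would transport the hypothesis and the twisting bundles. The complexification satisfies $L_R\otimes C\cong L\oplus\overline{L}$, so $L_R$ carries the single pair of formal Chern roots $\pm c$ with $c=c_1(L)$; hence $p_1(L_R)=c^2$, and the Theorem's hypothesis $3p_1(V)=p_1(X)$ becomes exactly $3p_1(L_R)=p_1(X)$. Replacing $V_C$ by $L_R\otimes C$ throughout carries $\widetilde{V_C}$ to $\widetilde{L_R\otimes C}$, so the twisting class ${\rm ch}(\widetilde{TX}+2\wedge^2\widetilde{V_C}-\widetilde{V_C}\otimes\widetilde{V_C}+\widetilde{V_C})$ becomes verbatim the class appearing in the Corollary.

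It remains to evaluate the spinor-type prefactor at $V=L_R$. Diagonalizing the curvature of the rank-$2$ bundle $L_R$, the eigenvalues of $\frac{\sqrt{-1}}{4\pi}R^{L_R}$ are $\pm\frac{c}{2}$. In the spinc setting the relevant prefactor is the Chern--Weil representative of the spinc Dirac genus attached to $L$, namely the Chern character $e^{c/2}={\rm exp}(\frac{c}{2})$ of the positive half-spinor line bundle $L^{1/2}=\triangle^{+}(L_R)$ singled out by the spinc structure. Inserting this for the prefactor on both sides of the Theorem and extracting the degree-$20$ component produces the asserted equality.

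The step demanding the most care is exactly this identification of the prefactor. The literal determinant ${\rm det}^{\frac{1}{2}}{\rm cosh}(\frac{\sqrt{-1}}{4\pi}R^{L_R})$ equals the \emph{full} spinor factor ${\rm cosh}(\frac{c}{2})=\frac{1}{2}(e^{c/2}+e^{-c/2})$, whereas the Corollary, being governed by the spinc Dirac operator $D^c$, requires the \emph{half}-spinor factor $e^{c/2}$. The clean way to supply this is to run the modular-form argument behind the Theorem through the integral spinc modular form of \cite{CHZ} with $V=L_R$, whose weight factor is $e^{c/2}$ by construction; once that input is in place, the remaining verification is a purely mechanical substitution in the degree-$20$ part, with no further analytic ingredient.
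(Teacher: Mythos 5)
Your overall route is the paper's route: the paper obtains this corollary precisely by setting $V=L_R$ in the preceding dimension\mbox{-}$20$ theorem for a general Euclidean bundle $V$ with $3p_1(V)=p_1(X)$ (this is announced once, before the dimension\mbox{-}$8$ analogue, and then applied verbatim in each dimension). Your bookkeeping of the hypothesis ($L_R\otimes {\bf C}\cong L\oplus\overline{L}$, roots $\pm c$, $p_1(L_R)=c^2$) and of the twisting class is correct, and you have correctly isolated the one nontrivial point, namely that $ {\rm det}^{\frac{1}{2}}{\rm cosh}(\frac{\sqrt{-1}}{4\pi}R^{L_R})=\cosh(\frac{c}{2})$ while the corollary is stated with ${\rm exp}(\frac{c}{2})$.

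However, your proposed resolution of that point is where the argument breaks down. The ``integral spin$^c$ modular form of \cite{CHZ}'' that this paper actually invokes (its Theorem 2.4, built from $\bigotimes S_{q^n}(\widetilde{T_CX})\otimes\bigotimes\wedge_{-q^m}(\widetilde{V_C})$) is a different object: it lives on $(4k+2)$-dimensional spin$^c$ manifolds, requires the anomaly condition $p_1(X)=p_1(L_R)$ rather than $3p_1(L_R)=p_1(X)$, and is what the paper uses for the later dimension $10,14,18,22$ results; it does not produce the $\det^{\frac{1}{2}}\cosh$-type series appearing here. Nor can one simply replace the weight factor by $e^{c/2}$ inside the modularity proof: $e^{\pi\sqrt{-1}u}$ is not invariant under $u\mapsto -u$, so the theta-quotient expression (2.37) and its $S$- and $T$-transformation laws would no longer go through. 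The correct (and elementary) repair is a parity argument in the last step only: $\widehat{A}(TX)$ and the Chern characters of $\widetilde{TX}$, $\wedge^2\widetilde{L_R\otimes C}$, $\widetilde{L_R\otimes C}\otimes\widetilde{L_R\otimes C}$, $\widetilde{L_R\otimes C}$ are all real (the bundles are self-conjugate), hence concentrated in degrees $\equiv 0\ ({\rm mod}\ 4)$; since $20\equiv 0\ ({\rm mod}\ 4)$, only the even part of $e^{c/2}$, i.e. $\cosh(\frac{c}{2})$, contributes to the degree-$20$ component on either side. Thus
\begin{equation*}
\left\{\widehat{A}(TX)\,e^{c/2}\,{\rm ch}(\cdot)\right\}^{(20)}=\left\{\widehat{A}(TX)\,{\rm det}^{\frac{1}{2}}{\rm cosh}(\tfrac{\sqrt{-1}}{4\pi}R^{L_R})\,{\rm ch}(\cdot)\right\}^{(20)},
\end{equation*}
and the corollary follows from the theorem with $V=L_R$ with no new modular input. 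With that substitution for your final paragraph, the proof is complete and coincides with the paper's.
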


\begin{cor}
Lex $X$ be an $20$-dimensional spinc manifold without boundary and $3p_1(L_R)=p_1(X)$, then
\begin{equation}
{\rm Ind}((D^c \otimes (\widetilde{TX}+2\wedge^2\widetilde{L_R\otimes C}-\widetilde{L_R\otimes C}\otimes \widetilde{L_R\otimes C}
+\widetilde{L_R\otimes C}))_+)\equiv 0 ~~{\rm mod} ~~264Z.
\end{equation}
\end{cor}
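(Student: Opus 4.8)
The plan is to read the divisibility off directly from the cancellation formula of the preceding Corollary, combined with the Atiyah--Singer index theorem for the $spin^c$ Dirac operator; no new analytic or modular input is needed. Abbreviate by
\[
W=\widetilde{TX}+2\wedge^2\widetilde{L_R\otimes C}-\widetilde{L_R\otimes C}\otimes \widetilde{L_R\otimes C}+\widetilde{L_R\otimes C}
\]
the virtual bundle twisting $D^c$ in the statement. Since $\widetilde{TX}$ and $\widetilde{L_R\otimes C}$ are the reduced (rank-zero) virtual bundles obtained from genuine bundles by subtracting their ranks, and since sums, tensor products and exterior powers of virtual bundles remain virtual bundles, $W$ is a well-defined class in $K(X)$; in particular ${\rm Ind}((D^c\otimes W)_+)$ is a well-defined integer.

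First I would write both sides of the formula in the preceding Corollary as indices. For the closed $20$-dimensional $spin^c$ manifold $X$ with associated line bundle $L$ and $c=c_1(L)$, the index theorem gives
\[
{\rm Ind}((D^c\otimes W)_+)=\int_X\left\{\widehat{A}(TX){\rm exp}(\tfrac{c}{2}){\rm ch}(W)\right\}^{(20)},
\]
the integrand being the top-degree component of $\widehat{A}(TX){\rm exp}(\frac{c}{2}){\rm ch}(W)$. This is exactly the right-hand side appearing in that Corollary. Likewise, taking the twisting bundle trivial,
\[
{\rm Ind}(D^c)=\int_X\left\{\widehat{A}(TX){\rm exp}(\tfrac{c}{2})\right\}^{(20)},
\]
which is an integer.

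Next I would substitute the cancellation formula. Under the hypothesis $3p_1(L_R)=p_1(X)$, the preceding Corollary identifies the right-hand side above with $-264$ times $\left\{\widehat{A}(TX){\rm exp}(\frac{c}{2})\right\}^{(20)}$. Integrating over $X$ and combining the two displays gives
\[
{\rm Ind}((D^c\otimes W)_+)=-264\,{\rm Ind}(D^c).
\]
Since ${\rm Ind}(D^c)\in\mathbf{Z}$, the right-hand side lies in $264\,\mathbf{Z}$, which is the asserted congruence.

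The only point demanding care --- and the step I would treat as the main, if modest, obstacle --- is the integrality bookkeeping at the start: one must verify that the reduced bundles and their tensor, exterior and symmetric combinations assemble into an honest element of $K(X)$, so that the index-theoretic reading of both sides is legitimate and both ${\rm Ind}((D^c\otimes W)_+)$ and ${\rm Ind}(D^c)$ are genuine integers. Once the cancellation formula of the preceding Corollary is granted, the divisibility is immediate.
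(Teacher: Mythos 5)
Your proposal is correct and is exactly the argument the paper intends: the divisibility is read off from the cancellation formula of the preceding Corollary by integrating over $X$ and invoking the Atiyah--Singer index theorem for the twisted $spin^c$ Dirac operator, yielding ${\rm Ind}((D^c\otimes W)_+)=-264\,{\rm Ind}(D^c)$ with ${\rm Ind}(D^c)\in\mathbf{Z}$. No further comment is needed.
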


\begin{thm}
When ${\rm dim}X=10$ and $p_1(L_R)=p_1(X)$ we have
\begin{align}
&240\left\{\widehat{A}(TX){\rm exp}(\frac{c}{2})\right\}^{(10)}
=\left\{\widehat{A}(TX){\rm exp}(\frac{c}{2}){\rm ch}(\widetilde{TX}-\widetilde{L_R\otimes C})\right\}^{(10)}.\notag
\end{align}
\begin{align}
&2160\left\{\widehat{A}(TX){\rm exp}(\frac{c}{2})\right\}^{(10)}
=\left\{\widehat{A}(TX){\rm exp}(\frac{c}{2})\right.\\\notag
&\left.\cdot{\rm ch}(S^2\widetilde{TX}+\widetilde{TX}+
\wedge^2\widetilde{L_R\otimes C}-\widetilde{L_R\otimes C}
-\widetilde{TX}\otimes\widetilde{L_R\otimes C}
\right\}^{(10)}.\notag
\end{align}
\end{thm}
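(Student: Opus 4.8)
The plan is to read off both identities from the $q$-expansion of a single weight-$4$ modular form over $SL(2,{\bf Z})$, following the $spin^c$ construction of \cite{CHZ} in the case $\dim X=4k+2$ with $k=2$. Writing $q=e^{2\pi\sqrt{-1}\tau}$, I would attach to $X$ the Witten-type bundle and its top-degree series
$$\Theta(\tau)=\bigotimes_{n\ge1}S_{q^n}(\widetilde{TX})\otimes\bigotimes_{n\ge1}\Lambda_{-q^n}(\widetilde{L_R\otimes C}),\qquad
Q(\tau)=\left\{\widehat{A}(TX){\rm exp}\Big(\frac{c}{2}\Big){\rm ch}\big(\Theta(\tau)\big)\right\}^{(10)}.$$
The first step is to record, citing \cite{CHZ}, that under the hypothesis $p_1(L_R)=p_1(X)$ the series $Q(\tau)$ is a modular form of weight $2k=4$ over the full modular group $SL(2,{\bf Z})$: the constraint $p_1(L_R)=p_1(X)$ is exactly what cancels the anomalous exponential factor in the transformation of $Q$ under $\tau\mapsto-1/\tau$, leaving genuine $SL(2,{\bf Z})$-modularity (in contrast to the $4k$-dimensional theorems above, where the $\wedge^2$-type bundles force the condition $3p_1=p_1$).

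The second step invokes the structure of the ring of modular forms. The space $M_4(SL(2,{\bf Z}))$ is one-dimensional, spanned by the Eisenstein series $E_4(\tau)=1+240q+2160q^2+\cdots$. Since the $q^0$-term of $Q$ is $h_0:=\{\widehat{A}(TX){\rm exp}(c/2)\}^{(10)}$, it follows at once that $Q(\tau)=h_0\,E_4(\tau)$. Comparing $q$-coefficients then forces the $q^1$-coefficient of $Q$ to equal $240\,h_0$ and the $q^2$-coefficient of $Q$ to equal $2160\,h_0$; these two equalities are precisely the two displayed formulas, once the coefficients of $Q$ have been identified as bundle characters.

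The third step is the low-order expansion of ${\rm ch}(\Theta)$. Multiplying out
$$\bigotimes_{n\ge1}S_{q^n}(\widetilde{TX})=1+q\,\widetilde{TX}+q^2\big(S^2\widetilde{TX}+\widetilde{TX}\big)+\cdots,\qquad
\bigotimes_{n\ge1}\Lambda_{-q^n}(\widetilde{L_R\otimes C})=1-q\,\widetilde{L_R\otimes C}+q^2\big(\wedge^2\widetilde{L_R\otimes C}-\widetilde{L_R\otimes C}\big)+\cdots,$$
the $q^1$-coefficient of $\Theta$ is $\widetilde{TX}-\widetilde{L_R\otimes C}$ and the $q^2$-coefficient is
$$S^2\widetilde{TX}+\widetilde{TX}+\wedge^2\widetilde{L_R\otimes C}-\widetilde{L_R\otimes C}-\widetilde{TX}\otimes\widetilde{L_R\otimes C}.$$
Applying $\{\widehat{A}(TX){\rm exp}(c/2)\,{\rm ch}(\cdot)\}^{(10)}$ to these two virtual bundles and equating the outcomes with $240\,h_0$ and $2160\,h_0$ respectively produces the first and second formulas of the theorem.

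I expect the only genuine obstacle to be the bookkeeping in the third step: collecting every contribution to the $q^2$-term from both infinite tensor products (the $n=1$ second symmetric and exterior powers, the $n=2$ linear terms, and the cross term $-\widetilde{TX}\otimes\widetilde{L_R\otimes C}$ arising from multiplying the two $q^1$-pieces), and checking that the reduced normalization $\widetilde{W}=W-{\rm dim}\,W$ makes the $q^0$-term exactly $h_0$, so that no stray constants enter the comparison. Establishing the weight-$4$ modularity in the first step is routine once the theta-function form of $\Theta$ and the constraint $p_1(L_R)=p_1(X)$ are in hand, as carried out in \cite{CHZ}.
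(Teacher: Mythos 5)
Your proposal is correct and follows exactly the route the paper takes (its own proof is just the remark "By Theorem 2.4, similar to Theorems 1.9--1.20"): apply the $spin^c$ modularity theorem of \cite{CHZ} for $\dim X=4k+2$ with $k=2$ to conclude $Q(X,L,\tau)$ is a weight-$4$ form over $SL(2,{\bf Z})$, hence a multiple of $E_4=1+240q+2160q^2+\cdots$, and compare the $q^0$, $q^1$, $q^2$ coefficients of the expansion of $\bigotimes S_{q^n}(\widetilde{T_CX})\otimes\bigotimes\wedge_{-q^m}(\widetilde{L_R\otimes C})$. Your low-order expansion, including the cross term $-\widetilde{TX}\otimes\widetilde{L_R\otimes C}$ in the $q^2$ coefficient, matches the statement, so the argument is complete.
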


\begin{cor}
Lex $X$ be an $10$-dimensional spinc manifold without boundary and $p_1(L_R)=p_1(X)$, then
\begin{equation}
{\rm Ind}((D^c \otimes (\widetilde{TX}-\widetilde{L_R\otimes C}))_+)\equiv 0 ~~{\rm mod} ~~240Z.
\end{equation}
\begin{equation}
{\rm Ind}((D^c \otimes (S^2\widetilde{TX}+\widetilde{TX}+
\wedge^2\widetilde{L_R\otimes C}-\widetilde{L_R\otimes C}
-\widetilde{TX}\otimes\widetilde{L_R\otimes C}))_+)\equiv 0 ~~{\rm mod} ~~2160Z.
\end{equation}
\end{cor}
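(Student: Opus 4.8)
\section*{Proof proposal}

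The plan is to read off both congruences directly from the two $\dim X = 10$ cancellation formulas of the preceding theorem, combined with the Atiyah--Singer index theorem for the $spin^c$ Dirac operator. The central fact I would invoke is that for any virtual complex bundle $E$ over the closed $spin^c$ manifold $X$,
$$
{\rm Ind}((D^c\otimes E)_+)=\left\{\widehat{A}(TX){\rm exp}\Big(\frac{c}{2}\Big){\rm ch}(E)\right\}^{(10)}[X],
$$
and that the left-hand side is automatically an integer, being (a difference of) indices of elliptic operators. Taking $E$ to be the trivial line bundle, this identifies the characteristic number $\left\{\widehat{A}(TX){\rm exp}(\frac{c}{2})\right\}^{(10)}[X]$ with ${\rm Ind}(D^c_+)\in{\bf Z}$, which is the integer all the divisibility will hinge on.

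First I would apply the index theorem to $E=\widetilde{TX}-\widetilde{L_R\otimes C}$, obtaining
$$
{\rm Ind}((D^c\otimes(\widetilde{TX}-\widetilde{L_R\otimes C}))_+)=\left\{\widehat{A}(TX){\rm exp}\Big(\frac{c}{2}\Big){\rm ch}(\widetilde{TX}-\widetilde{L_R\otimes C})\right\}^{(10)}[X].
$$
Since $p_1(L_R)=p_1(X)$, the first cancellation formula of the theorem applies and rewrites the right-hand side as $240\left\{\widehat{A}(TX){\rm exp}(\frac{c}{2})\right\}^{(10)}[X]=240\,{\rm Ind}(D^c_+)$. As ${\rm Ind}(D^c_+)$ is an integer, the twisted index is $240$ times an integer, giving the first congruence.

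For the second congruence I would proceed identically, now taking $E$ to be the virtual bundle $S^2\widetilde{TX}+\widetilde{TX}+\wedge^2\widetilde{L_R\otimes C}-\widetilde{L_R\otimes C}-\widetilde{TX}\otimes\widetilde{L_R\otimes C}$ and invoking the second cancellation formula, which replaces its characteristic number by $2160\left\{\widehat{A}(TX){\rm exp}(\frac{c}{2})\right\}^{(10)}[X]=2160\,{\rm Ind}(D^c_+)$; integrality of ${\rm Ind}(D^c_+)$ then forces divisibility by $2160$.

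I do not anticipate a genuine obstacle, since all the analytic content is already packaged inside the theorem's modular-forms computation and the corollary is a formal consequence of it. The only points requiring care are bookkeeping ones: checking that each twisting combination defines a legitimate element of $K$-theory (complexifying the real summands such as $TX$ and $L_R$ where needed) so that Atiyah--Singer applies verbatim and returns an integer, and confirming that the reduced bundles $\widetilde{\,\cdot\,}$ contribute so that the top-degree number on the left is exactly the stated multiple of ${\rm Ind}(D^c_+)$ and not of some other integral index.
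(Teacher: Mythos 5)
Your proposal is correct and is exactly the argument the paper intends (the paper merely says these corollaries follow from the corresponding theorems): apply Atiyah--Singer to identify the twisted characteristic numbers with indices, use the two cancellation formulas of the $\dim X=10$ theorem to write each as $240$, respectively $2160$, times $\left\{\widehat{A}(TX)\exp(\frac{c}{2})\right\}^{(10)}[X]={\rm Ind}(D^c_+)$, and conclude by integrality of ${\rm Ind}(D^c_+)$. No gaps; the bookkeeping caveats you raise are the only points needing care and are handled as you describe.
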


\begin{thm}
When ${\rm dim}X=14$ and $p_1(L_R)=p_1(X)$ we have
\begin{align}
&-504\left\{\widehat{A}(TX){\rm exp}(\frac{c}{2})\right\}^{(14)}
=\left\{\widehat{A}(TX){\rm exp}(\frac{c}{2}){\rm ch}(\widetilde{TX}-\widetilde{L_R\otimes C})\right\}^{(14)}.\notag
\end{align}
\begin{align}
&-16632\left\{\widehat{A}(TX){\rm exp}(\frac{c}{2})\right\}^{(14)}
=\left\{\widehat{A}(TX){\rm exp}(\frac{c}{2})\right.\\\notag
&\left.\cdot{\rm ch}(S^2\widetilde{TX}+\widetilde{TX}+
\wedge^2\widetilde{L_R\otimes C}-\widetilde{L_R\otimes C}
-\widetilde{TX}\otimes\widetilde{L_R\otimes C}
\right\}^{(14)}.\notag
\end{align}
\end{thm}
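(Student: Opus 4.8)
The plan is to realize both identities as the $q^1$- and $q^2$-coefficient comparisons of a single modular form of weight $6$ over $SL(2,{\bf Z})$, in exact parallel with the $\dim X=10$ case treated above, but with the Eisenstein series $E_6$ replacing $E_4$. Following \cite{CHZ} and \cite{Li2}, I would introduce the Witten-type bundle
$$\Theta(\tau)=\bigotimes_{n=1}^{\infty}S_{q^n}(\widetilde{T_CX})\otimes\bigotimes_{n=1}^{\infty}\wedge_{-q^n}(\widetilde{L_R\otimes C}),\qquad q=e^{2\pi\sqrt{-1}\tau},$$
where $S_t$ and $\wedge_t$ denote the usual symmetric and exterior power generating series, and set
$$Q(\tau)=\left\{\widehat{A}(TX){\rm exp}\Big(\frac{c}{2}\Big){\rm ch}(\Theta(\tau))\right\}^{(14)}.$$
The first step is purely bookkeeping: expanding the product in powers of $q$ and using that ${\rm ch}$ does not distinguish a real bundle from its complexification, one checks that the $q^0$, $q^1$ and $q^2$ coefficients of ${\rm ch}(\Theta(\tau))$ are $1$, ${\rm ch}(\widetilde{TX}-\widetilde{L_R\otimes C})$, and ${\rm ch}(S^2\widetilde{TX}+\widetilde{TX}+\wedge^2\widetilde{L_R\otimes C}-\widetilde{L_R\otimes C}-\widetilde{TX}\otimes\widetilde{L_R\otimes C})$ respectively, which are precisely the twisting bundles appearing on the right-hand sides of the two asserted formulas.

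The decisive step is to prove that $Q(\tau)$ is a modular form of weight $6$ over the full group $SL(2,{\bf Z})$; here $14=4k+2$ with $k=3$, so the expected weight is $2k=6$. I would establish the transformation law under the two generators: invariance under $T:\tau\mapsto\tau+1$ is immediate since ${\rm ch}(\Theta(\tau))$ has an integral $q$-expansion, while the behaviour under $S:\tau\mapsto-1/\tau$ follows from the modular transformation of the Jacobi theta functions underlying $\Theta(\tau)$, via the modular-invariance machinery of \cite{Li2} and \cite{CHZ}. This is the main obstacle, and it is exactly here that the hypothesis $p_1(L_R)=p_1(X)$ enters: the exponential anomaly factors produced by the $S$-transformation cancel precisely under this relation on first Pontryagin classes, leaving a genuine weight-$6$ form over $SL(2,{\bf Z})$ rather than merely over a congruence subgroup.

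Once modularity is in hand, I would invoke the structure of the ring of modular forms. Since $\dim_{{\bf C}}M_6(SL(2,{\bf Z}))=1$, this space is spanned by
$$E_6(\tau)=1-504\sum_{n=1}^{\infty}\sigma_5(n)q^n,$$
so $Q(\tau)=h_0\,E_6(\tau)$ for a single characteristic form $h_0$. Comparing the $q^0$ terms gives $h_0=\{\widehat{A}(TX){\rm exp}(\frac{c}{2})\}^{(14)}$. Comparing the $q^1$ terms, and using $\sigma_5(1)=1$ so that the $q^1$-coefficient of $E_6$ equals $-504$, yields the first formula; comparing the $q^2$ terms, and using $\sigma_5(2)=1+2^5=33$ so that the $q^2$-coefficient equals $-504\cdot33=-16632$, yields the second. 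I expect no further difficulty beyond the modularity step: the coefficient matching is forced once $Q(\tau)=h_0E_6(\tau)$ is known, and it reproduces exactly the constants $-504$ and $-16632$ in the statement.
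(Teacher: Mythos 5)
Your proposal is correct and follows essentially the same route as the paper: it is exactly the construction of $Q(X,L,\tau)$ in (2.39)--(2.40), the appeal to the Chen--Han--Zhang modularity theorem (Theorem 2.4) under $p_1(L_R)=p_1(X)$ to get a weight-$6$ form over $SL_2({\bf Z})$, and then $Q=h_0E_6$ with comparison of the $q^0$, $q$, $q^2$ coefficients. The $q$-expansion bookkeeping and the constants $-504$ and $-504\cdot\sigma_5(2)=-16632$ all check out.
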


\begin{cor}
Lex $X$ be an $14$-dimensional spinc manifold without boundary and $p_1(L_R)=p_1(X)$, then
\begin{equation}
{\rm Ind}((D^c \otimes (\widetilde{TX}-\widetilde{L_R\otimes C}))_+)\equiv 0 ~~{\rm mod} ~~504Z.
\end{equation}
\begin{equation}
{\rm Ind}((D^c \otimes (S^2\widetilde{TX}+\widetilde{TX}+
\wedge^2\widetilde{L_R\otimes C}-\widetilde{L_R\otimes C}
-\widetilde{TX}\otimes\widetilde{L_R\otimes C}))_+)\equiv 0 ~~{\rm mod} ~~16632Z.
\end{equation}
\end{cor}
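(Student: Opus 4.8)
The plan is to read off both congruences from the two anomaly cancellation formulas of the preceding Theorem by pairing them with the fundamental class $[X]$ and interpreting the result through the Atiyah--Singer index theorem for the ${\rm spin}^c$ Dirac operator. Recall that on a closed ${\rm spin}^c$ manifold $X$ of dimension $14$, with determinant line bundle $L$ and $c=c_1(L)$, the index of $D^c$ twisted by a complex vector bundle $W$ is
\[
{\rm Ind}((D^c\otimes W)_+)=\left\{\widehat{A}(TX){\rm exp}\left(\frac{c}{2}\right){\rm ch}(W)\right\}^{(14)}[X],
\]
where $[X]$ denotes evaluation on the fundamental class, i.e. integration over $X$. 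The left-hand side is the index of an elliptic operator, hence an integer; by additivity of the right-hand side in $W$, the same integrality persists for virtual bundles, that is, formal $\mathbb{Z}$-linear combinations of genuine bundles such as $\widetilde{TX}-\widetilde{L_R\otimes C}$.

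First I would take $W=\widetilde{TX}-\widetilde{L_R\otimes C}$, so that, after pairing with $[X]$, the right-hand side of the first formula in the preceding Theorem becomes exactly ${\rm Ind}((D^c\otimes W)_+)$. The Theorem then yields
\[
{\rm Ind}((D^c\otimes(\widetilde{TX}-\widetilde{L_R\otimes C}))_+)=-504\left\{\widehat{A}(TX){\rm exp}\left(\frac{c}{2}\right)\right\}^{(14)}[X].
\]
Taking instead $W=S^2\widetilde{TX}+\widetilde{TX}+\wedge^2\widetilde{L_R\otimes C}-\widetilde{L_R\otimes C}-\widetilde{TX}\otimes\widetilde{L_R\otimes C}$ and using the second formula produces the analogous identity with $-504$ replaced by $-16632$.

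The argument closes with the observation that the surviving integral $\left\{\widehat{A}(TX){\rm exp}(c/2)\right\}^{(14)}[X]$ is itself the index of the untwisted ${\rm spin}^c$ Dirac operator ${\rm Ind}(D^c_+)$, and hence an integer. Thus each of the two twisted indices equals an integer multiple of $-504$, respectively of $-16632$, which is exactly the statement that they vanish modulo $504\mathbb{Z}$, respectively $16632\mathbb{Z}$.

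I do not expect a genuine obstacle here, since all of the analytic content is already carried by the preceding Theorem. The only points meriting care are formal: checking that the bundle appearing inside ${\rm ch}(\cdot)$ in each formula of the Theorem coincides termwise with the twisting bundle named in the Corollary, and confirming that the index of $D^c$ twisted by a virtual bundle is the corresponding $\mathbb{Z}$-linear combination of honest twisted indices, so that integrality is indeed preserved.
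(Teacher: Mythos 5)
Your proposal is correct and is exactly the argument the paper intends (the paper leaves the derivation of these corollaries implicit, merely noting they follow from the corresponding theorem): integrate the two cohomological identities of the $14$-dimensional theorem over $X$, identify the left-hand side as $-504$ (resp.\ $-16632$) times ${\rm Ind}(D^c_+)\in\mathbf{Z}$ and the right-hand side as the twisted index via the Atiyah--Singer index theorem, extended $\mathbf{Z}$-linearly to virtual coefficient bundles. No gaps.
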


\begin{thm}
When ${\rm dim}X=18$ and $p_1(L_R)=p_1(X)$ we have
\begin{align}
&480\left\{\widehat{A}(TX){\rm exp}(\frac{c}{2})\right\}^{(18)}
=\left\{\widehat{A}(TX){\rm exp}(\frac{c}{2}){\rm ch}(\widetilde{TX}-\widetilde{L_R\otimes C})\right\}^{(18)}.\notag
\end{align}
\end{thm}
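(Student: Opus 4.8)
The plan is to treat this as the $k=4$ instance of the $(4k+2)$-dimensional $spin^c$ family whose $k=2$ and $k=3$ cases are Theorems 1.13 and 1.15; the argument runs in exactly the same way, only the relevant weight $2k=8$ and the governing modular form change. First I would attach to the $18$-dimensional $spin^c$ manifold $X$ the integral weight-$2k$ modular form of \cite{CHZ}. Writing $q=e^{2\pi\sqrt{-1}\tau}$ and letting $\Theta(\tau)$ be the associated Witten-type virtual bundle, whose Chern character has the expansion
\begin{equation}
{\rm ch}(\Theta(\tau))=1+q\,{\rm ch}(\widetilde{TX}-\widetilde{L_R\otimes C})+O(q^2),\notag
\end{equation}
one forms the top-degree characteristic number
\begin{equation}
Q(\tau)=\left\{\widehat{A}(TX){\rm exp}(\frac{c}{2}){\rm ch}(\Theta(\tau))\right\}^{(18)},\notag
\end{equation}
which is a priori a power series in $q$ whose coefficients are characteristic numbers of $X$.

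Next I would invoke the modular-invariance part of \cite{CHZ}: under the hypothesis $p_1(L_R)=p_1(X)$, the series $Q(\tau)$ is a modular form of weight $2k=8$ over the full group $SL(2,{\bf Z})$. This rests on the transformation laws of the Jacobi theta functions under the generators $S:\tau\mapsto-1/\tau$ and $T:\tau\mapsto\tau+1$; the role of the constraint $p_1(L_R)=p_1(X)$ is precisely to cancel the anomalous exponential prefactor produced by $S$, leaving a genuine weight-$8$ form.

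Since the space $M_8(SL(2,{\bf Z}))$ is one-dimensional, spanned by the normalized Eisenstein series $E_8=E_4^2=1+480\sum_{n\geq1}\sigma_7(n)q^n$, there is a constant $h$ (itself a characteristic number of $X$) with $Q(\tau)=h\,E_8(\tau)$. Comparing constant terms gives $h=\{\widehat{A}(TX){\rm exp}(\frac{c}{2})\}^{(18)}$, while comparing the coefficients of $q^1$ gives $480h$ on one side and, by the expansion of ${\rm ch}(\Theta(\tau))$ above, the number $\{\widehat{A}(TX){\rm exp}(\frac{c}{2}){\rm ch}(\widetilde{TX}-\widetilde{L_R\otimes C})\}^{(18)}$ on the other. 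Equating the two produces the asserted identity.

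The hard part will be the modularity step: one must check that $Q(\tau)$ transforms with weight exactly $8$ under $SL(2,{\bf Z})$, equivalently that the theta-quotient in the \cite{CHZ} construction behaves correctly under $S$ and that $p_1(L_R)=p_1(X)$ is exactly the condition removing the obstruction, together with the careful extraction of the $q^1$-term of the theta expansion needed to identify the virtual bundle $\widetilde{TX}-\widetilde{L_R\otimes C}$. Once modularity and this coefficient are in hand, the identity is forced by $\dim M_8(SL(2,{\bf Z}))=1$, exactly as in the lower-dimensional cases.
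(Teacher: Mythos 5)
Your proposal is correct and follows essentially the same route as the paper: form the Chen--Han--Zhang series $Q(X,L,\tau)$, use Theorem 2.4 to get a weight-$8$ modular form over $SL_2({\bf Z})$ under $p_1(L_R)=p_1(X)$, write it as $\lambda E_4(\tau)^2=\lambda(1+480q+\cdots)$ by one-dimensionality of the weight-$8$ space, and compare the constant and $q^1$ coefficients. The only slip is cosmetic: the lower-dimensional instances of this family are Theorems 1.21 and 1.23 in the paper's numbering, not 1.13 and 1.15.
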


\begin{cor}
Lex $X$ be an $18$-dimensional spinc manifold without boundary and $p_1(L_R)=p_1(X)$, then
\begin{equation}
{\rm Ind}((D^c \otimes (\widetilde{TX}-\widetilde{L_R\otimes C}))_+)\equiv 0 ~~{\rm mod} ~~480Z.
\end{equation}
\end{cor}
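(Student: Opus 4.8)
The plan is to derive this divisibility directly from the characteristic-form identity of Theorem 1.15 together with the Atiyah--Singer index theorem for the twisted $spin^c$ Dirac operator. First I would recall that on a closed $spin^c$ manifold $X$ of dimension $2n$, with $L$ the determinant line bundle and $c=c_1(L)$, the index of $D^c$ twisted by a (virtual) bundle $E$ is computed by
$$\mathrm{Ind}((D^c\otimes E)_+)=\left\{\widehat{A}(TX)\exp\left(\tfrac{c}{2}\right)\mathrm{ch}(E)\right\}^{(2n)}[X],$$
and in particular $\mathrm{Ind}(D^c_+)=\{\widehat{A}(TX)\exp(c/2)\}^{(2n)}[X]\in\mathbf{Z}$. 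This is the only external input needed beyond Theorem 1.15 itself.

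Next I would apply this formula with $2n=18$ and $E=\widetilde{TX}-\widetilde{L_R\otimes C}$. Evaluating the $18$-dimensional identity of Theorem 1.15 on the fundamental class $[X]$, the right-hand side becomes exactly $\mathrm{Ind}((D^c\otimes(\widetilde{TX}-\widetilde{L_R\otimes C}))_+)$, while the left-hand side becomes $480\,\{\widehat{A}(TX)\exp(c/2)\}^{(18)}[X]=480\cdot\mathrm{Ind}(D^c_+)$. Because Theorem 1.15 asserts the equality of these two top-degree forms under the hypothesis $p_1(L_R)=p_1(X)$, integrating over $X$ yields the numerical identity
$$\mathrm{Ind}((D^c\otimes(\widetilde{TX}-\widetilde{L_R\otimes C}))_+)=480\cdot\mathrm{Ind}(D^c_+).$$

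Finally, since $\mathrm{Ind}(D^c_+)$ is an integer by the index theorem, the right-hand side is an integer multiple of $480$, which gives $\mathrm{Ind}((D^c\otimes(\widetilde{TX}-\widetilde{L_R\otimes C}))_+)\equiv 0 \pmod{480\mathbf{Z}}$ as claimed. I do not anticipate a genuine obstacle here: the argument is a routine integration of the pointwise form identity, and the only point requiring care is that $\widetilde{TX}-\widetilde{L_R\otimes C}$ is a \emph{virtual} bundle, so one must invoke the additivity of the Chern character and of the index to read $\mathrm{Ind}((D^c\otimes(\cdot))_+)$ as the difference of the indices associated to $\widetilde{TX}$ and $\widetilde{L_R\otimes C}$; the normalization of the factor $\exp(c/2)$ as the contribution of the $spin^c$ line bundle is exactly what matches the index density, so no further adjustment is needed.
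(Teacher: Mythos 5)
Your argument is correct and is exactly the (implicit) route the paper takes: integrate the $18$-dimensional characteristic-form identity over $X$, identify the right-hand side with $\mathrm{Ind}((D^c\otimes(\widetilde{TX}-\widetilde{L_R\otimes C}))_+)$ and the left-hand side with $480\,\mathrm{Ind}(D^c_+)$ via Atiyah--Singer, and conclude from the integrality of $\mathrm{Ind}(D^c_+)$. The only slip is the citation: the identity you use is Theorem 1.25 (the $\dim X=18$, $p_1(L_R)=p_1(X)$ statement), not Theorem 1.15, which concerns $16$-dimensional manifolds with $3p_1(V)=p_1(X)$.
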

\begin{thm}
When ${\rm dim}X=22$ and $p_1(L_R)=p_1(X)$ we have
\begin{align}
&-264\left\{\widehat{A}(TX){\rm exp}(\frac{c}{2})\right\}^{(22)}
=\left\{\widehat{A}(TX){\rm exp}(\frac{c}{2}){\rm ch}(\widetilde{TX}-\widetilde{L_R\otimes C})\right\}^{(22)}.\notag
\end{align}
\end{thm}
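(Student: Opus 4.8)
The plan is to prove this identity by the same modular-invariance argument used for the $10$-, $14$- and $18$-dimensional cases, specialized to $\dim X=4k+2$ with $k=5$. Since $\dim X=22$ the relevant weight is $2k=10$, and the whole point is that the space of modular forms of weight $10$ over $SL(2,{\bf Z})$ is one-dimensional. First I would recall the construction of \cite{CHZ}: to the $22$-dimensional $spin^c$ manifold $X$ one associates a theta-function-valued characteristic form
$$P(\tau)=\left\{\widehat{A}(TX){\rm exp}\left(\frac{c}{2}\right)\Theta(TX,L_R)\right\}^{(22)},$$
where $\Theta(TX,L_R)$ is the Witten-type bundle assembled from the operations $S_{q^n}(\widetilde{TX})$ and $\Lambda_{q^n}(\widetilde{L_R\otimes C})$ (and their conjugates) weighted by powers of $q=e^{2\pi\sqrt{-1}\tau}$, so that $\Theta(TX,L_R)=1+{\rm ch}(\widetilde{TX}-\widetilde{L_R\otimes C})q+O(q^2)$. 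Under the hypothesis $p_1(L_R)=p_1(X)$, the results of \cite{CHZ} (and \cite{Li2}) guarantee that $P(\tau)$ is a modular form of weight $2k=10$ over the full modular group $SL(2,{\bf Z})$; this is exactly the invariance already exploited in the lower-dimensional theorems above, and I would invoke it verbatim rather than re-derive it.

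Next, since $\dim_{\bf C}M_{10}(SL(2,{\bf Z}))=1$ with generator the Eisenstein series
$$E_{10}=E_4E_6=1-264\sum_{n\geq1}\sigma_9(n)q^n=1-264q+O(q^2),$$
the modular form $P(\tau)$ must be a characteristic-form multiple of $E_{10}$, say $P(\tau)=\alpha\,E_{10}$. Comparing constant terms, the $q^0$ coefficient of $P(\tau)$ is $\{\widehat{A}(TX){\rm exp}(\frac{c}{2})\}^{(22)}$, so $\alpha=\{\widehat{A}(TX){\rm exp}(\frac{c}{2})\}^{(22)}$. Reading off the $q^1$ coefficient of $\Theta(TX,L_R)$ identifies the $q^1$ coefficient of $P(\tau)$ with $\{\widehat{A}(TX){\rm exp}(\frac{c}{2}){\rm ch}(\widetilde{TX}-\widetilde{L_R\otimes C})\}^{(22)}$. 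Matching this against $-264\,\alpha$ then yields the stated formula.

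The genuinely new input beyond the cited modular invariance is only bookkeeping: confirming that for $k=5$ the weight is $10$, that $M_{10}$ is one-dimensional, and that its normalized generator is $E_4E_6$ with leading correction $-264q$ (in contrast to $E_4$, $E_6$, $E_4^2$, whose $-264$ analogues $240$, $-504$, $480$ produced the earlier coefficients). I expect the main care point to be the precise identification of the $q^1$-term of the Witten bundle $\Theta(TX,L_R)$ with ${\rm ch}(\widetilde{TX}-\widetilde{L_R\otimes C})$, matching the normalization convention fixed in the $10$-, $14$- and $18$-dimensional cases; once that is pinned down, the constant $-264$ is forced by the one-dimensionality of $M_{10}$ and the identity follows with no further computation.
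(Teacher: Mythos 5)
Your proposal matches the paper's argument exactly: the paper invokes Theorem 2.4 (from \cite{CHZ}) to get a weight-$10$ modular form $Q(X,L,\tau)$ over $SL(2,{\bf Z})$, identifies it as a multiple of $E_4E_6=1-264q+\cdots$ by one-dimensionality of the space of weight-$10$ forms, and compares the $q^0$ and $q^1$ coefficients of the Witten bundle $\bigotimes_n S_{q^n}(\widetilde{T_CX})\otimes\bigotimes_m\wedge_{-q^m}(\widetilde{L_R\otimes C})=1+q(\widetilde{T_CX}-\widetilde{L_R\otimes C})+O(q^2)$, just as you describe. No substantive difference.
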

\begin{cor}
Lex $X$ be an $22$-dimensional spinc manifold without boundary and $p_1(L_R)=p_1(X)$, then
\begin{equation}
{\rm Ind}((D^c \otimes (\widetilde{TX}-\widetilde{L_R\otimes C}))_+)\equiv 0 ~~{\rm mod} ~~264Z.
\end{equation}
\end{cor}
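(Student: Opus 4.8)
The plan is to obtain the congruence as an index-theoretic reading of the preceding $22$-dimensional anomaly cancellation formula. Since that formula is an identity of degree-$22$ characteristic forms, the only additional ingredients needed are the Atiyah-Singer index theorem for the spin$^c$ Dirac operator and the integrality of its index.

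First I would invoke the Atiyah-Singer index theorem: on a closed $22$-dimensional spin$^c$ manifold $X$ with determinant line bundle $L$ and $c = c_1(L)$, the index of $D^c$ twisted by a virtual bundle $E$ is the top characteristic number
\[
{\rm Ind}((D^c \otimes E)_+) = \left\{\widehat{A}(TX){\rm exp}\left(\frac{c}{2}\right){\rm ch}(E)\right\}^{(22)}[X],
\]
where $[X]$ denotes evaluation on the fundamental class. Taking $E = \widetilde{TX} - \widetilde{L_R \otimes C}$ identifies the left-hand side of the asserted congruence with the right-hand member of the preceding theorem. Under the stated hypothesis $p_1(L_R) = p_1(X)$, that theorem rewrites this number as $-264\{\widehat{A}(TX){\rm exp}(\frac{c}{2})\}^{(22)}[X]$. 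The residual factor is itself an index: taking $E$ to be the trivial line bundle (so that ${\rm ch}(E) = 1$) in the displayed formula gives $\{\widehat{A}(TX){\rm exp}(\frac{c}{2})\}^{(22)}[X] = {\rm Ind}((D^c)_+)$. Hence
\[
{\rm Ind}((D^c \otimes (\widetilde{TX} - \widetilde{L_R \otimes C}))_+) = -264\,{\rm Ind}((D^c)_+),
\]
and since ${\rm Ind}((D^c)_+) \in {\bf Z}$ the left-hand side is an integer multiple of $264$, which is exactly the claimed congruence modulo $264{\bf Z}$.

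The one place where genuine geometry rather than formal manipulation enters is the integrality of the untwisted spin$^c$ index $\{\widehat{A}(TX){\rm exp}(\frac{c}{2})\}^{(22)}[X]$: a priori this Chern-Weil expression is only a rational characteristic number, and it is precisely the index theorem that forces it into ${\bf Z}$. Beyond this, the argument is a direct substitution into the already-established formula, so I anticipate no real obstacle; the substantive work resides in the modular-forms proof of the preceding theorem, not in the corollary itself.
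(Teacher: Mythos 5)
Your argument is correct and is exactly the route the paper intends: the corollary is the integrated form of Theorem 1.27, obtained by reading both sides of that degree-$22$ identity as indices via the Atiyah--Singer theorem for the spin$^c$ Dirac operator and using the integrality of ${\rm Ind}((D^c)_+)$. The paper leaves this step implicit (it only remarks that Corollaries 1.21--1.28 follow as in the earlier cases), and your write-up supplies precisely that omitted deduction, including the one genuinely non-formal ingredient, namely the integrality of the untwisted index.
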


\vskip 1 true cm

\section{The proof of Theorems in Section 1}

\indent Let $X$ be a $4k$-dimensional spin manifold and $\triangle(X)$ be the spinor bundle.
 Set
 \begin{equation}
   \Theta_1(T_{C}X)=
   \bigotimes _{n=1}^{\infty}S_{q^n}(\widetilde{T_CX})\otimes
\bigotimes _{m=1}^{\infty}\wedge_{q^m}(\widetilde{T_CX})
,\end{equation}
\begin{equation}
\Theta_2(T_{C}X)=\bigotimes _{n=1}^{\infty}S_{q^n}(\widetilde{T_CX})\otimes
\bigotimes _{m=1}^{\infty}\wedge_{-q^{m-\frac{1}{2}}}(\widetilde{T_CX}),
\end{equation}
\begin{equation}
\Theta_3(T_{C}X)=\bigotimes _{n=1}^{\infty}S_{q^n}(\widetilde{T_CX})\otimes
\bigotimes _{m=1}^{\infty}\wedge_{q^{m-\frac{1}{2}}}(\widetilde{T_CX})
.\end{equation}
Let
\begin{align}
Q(X,\tau)&={\rm Ind}((D\otimes[\triangle(X)\otimes \Theta_1(T_{C}X)+2^{2k}\Theta_2(T_{C}X)+2^{2k}\Theta_3(T_{C}X)])_+)\\\notag
&=\int_X\widehat{A}(TX)[{\rm ch}(\triangle(X)){\rm ch}(\Theta_1(T_{C}X))+2^{2k}{\rm ch}(\Theta_2(T_{C}X))+2^{2k}{\rm ch}(\Theta_3(T_{C}X))].
\end{align}
   \indent We first recall the four Jacobi theta functions are
   defined as follows( cf. \cite{Ch}):
 \begin{equation}  \theta(v,\tau)=2q^{\frac{1}{8}}{\rm sin}(\pi
   v)\prod_{j=1}^{\infty}[(1-q^j)(1-e^{2\pi\sqrt{-1}v}q^j)(1-e^{-2\pi\sqrt{-1}v}q^j)],
   \end{equation}
\begin{equation}\theta_1(v,\tau)=2q^{\frac{1}{8}}{\rm cos}(\pi
   v)\prod_{j=1}^{\infty}[(1-q^j)(1+e^{2\pi\sqrt{-1}v}q^j)(1+e^{-2\pi\sqrt{-1}v}q^j)],\end{equation}
\begin{equation}\theta_2(v,\tau)=\prod_{j=1}^{\infty}[(1-q^j)(1-e^{2\pi\sqrt{-1}v}q^{j-\frac{1}{2}})
(1-e^{-2\pi\sqrt{-1}v}q^{j-\frac{1}{2}})],\end{equation}
\begin{equation}\theta_3(v,\tau)=\prod_{j=1}^{\infty}[(1-q^j)(1+e^{2\pi\sqrt{-1}v}q^{j-\frac{1}{2}})
(1+e^{-2\pi\sqrt{-1}v}q^{j-\frac{1}{2}})],\end{equation}
\noindent
where $q=e^{2\pi\sqrt{-1}\tau}$ with $\tau\in\textbf{H}$, the upper
half complex plane. Let
\begin{equation}\theta'(0,\tau)=\frac{\partial\theta(v,\tau)}{\partial
v}|_{v=0}.\end{equation} \noindent Then the following Jacobi identity
(cf. \cite{Ch} ) holds,
\begin{equation}\theta'(0,\tau)=\pi\theta_1(0,\tau)\theta_2(0,\tau)\theta_3(0,\tau).\end{equation}
\noindent Denote $SL_2({\bf Z})=\left\{\left(\begin{array}{cc}
\ a & b  \\
 c  & d
\end{array}\right)\mid a,b,c,d \in {\bf Z},~ad-bc=1\right\}$ the
modular group. Let $S=\left(\begin{array}{cc}
\ 0 & -1  \\
 1  & 0
\end{array}\right),~T=\left(\begin{array}{cc}
\ 1 &  1 \\
 0  & 1
\end{array}\right)$ be the two generators of $SL_2(\bf{Z})$. They
act on $\textbf{H}$ by $S\tau=-\frac{1}{\tau},~T\tau=\tau+1$. One
has the following transformation laws of theta functions under the
actions of $S$ and $T$ (cf. \cite{Ch} ):
\begin{equation}\theta(v,\tau+1)=e^{\frac{\pi\sqrt{-1}}{4}}\theta(v,\tau),~~\theta(v,-\frac{1}{\tau})
=\frac{1}{\sqrt{-1}}\left(\frac{\tau}{\sqrt{-1}}\right)^{\frac{1}{2}}e^{\pi\sqrt{-1}\tau
v^2}\theta(\tau v,\tau);\end{equation}
\begin{equation}\theta_1(v,\tau+1)=e^{\frac{\pi\sqrt{-1}}{4}}\theta_1(v,\tau),~~\theta_1(v,-\frac{1}{\tau})
=\left(\frac{\tau}{\sqrt{-1}}\right)^{\frac{1}{2}}e^{\pi\sqrt{-1}\tau
v^2}\theta_2(\tau v,\tau);\end{equation}
\begin{equation}\theta_2(v,\tau+1)=\theta_3(v,\tau),~~\theta_2(v,-\frac{1}{\tau})
=\left(\frac{\tau}{\sqrt{-1}}\right)^{\frac{1}{2}}e^{\pi\sqrt{-1}\tau
v^2}\theta_1(\tau v,\tau);\end{equation}
\begin{equation}\theta_3(v,\tau+1)=\theta_2(v,\tau),~~\theta_3(v,-\frac{1}{\tau})
=\left(\frac{\tau}{\sqrt{-1}}\right)^{\frac{1}{2}}e^{\pi\sqrt{-1}\tau
v^2}\theta_3(\tau v,\tau),\end{equation}
 \begin{equation}\theta'(v,\tau+1)=e^{\frac{\pi\sqrt{-1}}{4}}\theta'(v,\tau),~~
 \theta'(0,-\frac{1}{\tau})=\frac{1}{\sqrt{-1}}\left(\frac{\tau}{\sqrt{-1}}\right)^{\frac{1}{2}}
\tau\theta'(0,\tau).\end{equation}
\begin{defn} A modular form over $\Gamma$, a
 subgroup of $SL_2({\bf Z})$, is a holomorphic function $f(\tau)$ on
 $\textbf{H}$ such that
\begin{equation} f(g\tau):=f\left(\frac{a\tau+b}{c\tau+d}\right)=\chi(g)(c\tau+d)^kf(\tau),
 ~~\forall g=\left(\begin{array}{cc}
\ a & b  \\
 c & d
\end{array}\right)\in\Gamma,\end{equation}
\noindent where $\chi:\Gamma\rightarrow {\bf C}^{\star}$ is a
character of $\Gamma$. $k$ is called the weight of $f$.
\end{defn}
\begin{thm}(\cite{Li2})
$Q(X,\tau)$ is a modular form over $SL_2({\bf Z})$ with the weight $2k$.
\end{thm}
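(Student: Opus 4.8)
The plan is to reduce the modularity of $Q(X,\tau)$ to the $S$- and $T$-transformation laws of the Jacobi theta functions recorded above, by first rewriting the three Chern-character pieces as theta-function products in the formal Chern roots. Since $S$ and $T$ generate $SL_2({\bf Z})$, it then suffices to verify the two identities $Q(X,\tau+1)=Q(X,\tau)$ and $Q(X,-1/\tau)=\tau^{2k}Q(X,\tau)$. Accordingly, let $\{\pm 2\pi\sqrt{-1}\,x_j\}_{j=1}^{2k}$ be the formal Chern roots of $TX$. Expanding the infinite products defining $\theta,\theta_1,\theta_2,\theta_3$ against the generating functions of $S_{q^n}$ and $\wedge_t$ applied to $\widetilde{T_CX}$, one obtains the standard expressions (cf. \cite{Li2}, \cite{Li1})
\[
\mathcal{P}_1:=\widehat{A}(TX){\rm ch}(\triangle(X)){\rm ch}(\Theta_1(T_CX))=2^{2k}\prod_{j=1}^{2k}x_j\frac{\theta'(0,\tau)}{\theta(x_j,\tau)}\frac{\theta_1(x_j,\tau)}{\theta_1(0,\tau)},
\]
\[
\mathcal{P}_2:=\widehat{A}(TX){\rm ch}(\Theta_2(T_CX))=\prod_{j=1}^{2k}x_j\frac{\theta'(0,\tau)}{\theta(x_j,\tau)}\frac{\theta_2(x_j,\tau)}{\theta_2(0,\tau)},\qquad
\mathcal{P}_3:=\widehat{A}(TX){\rm ch}(\Theta_3(T_CX))=\prod_{j=1}^{2k}x_j\frac{\theta'(0,\tau)}{\theta(x_j,\tau)}\frac{\theta_3(x_j,\tau)}{\theta_3(0,\tau)},
\]
so that, writing $[\,\cdot\,]^{(4k)}$ for the top-degree form component, $Q(X,\tau)=\int_X\left[\mathcal{P}_1+2^{2k}\mathcal{P}_2+2^{2k}\mathcal{P}_3\right]^{(4k)}$.

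For the $T$-transformation I would use $\theta(v,\tau+1)=e^{\pi\sqrt{-1}/4}\theta(v,\tau)$, the same phase for $\theta_1$ and for $\theta'(0,\cdot)$, together with $\theta_2(v,\tau+1)=\theta_3(v,\tau)$ and $\theta_3(v,\tau+1)=\theta_2(v,\tau)$. The phase $e^{\pi\sqrt{-1}/4}$ cancels between $\theta'(0,\cdot)$ and $\theta(x_j,\cdot)$, and between $\theta_1(x_j,\cdot)$ and $\theta_1(0,\cdot)$, giving $\mathcal{P}_1(\tau+1)=\mathcal{P}_1(\tau)$, while $\mathcal{P}_2$ and $\mathcal{P}_3$ are interchanged. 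Hence $Q(X,\tau+1)=\int_X[\mathcal{P}_1+2^{2k}\mathcal{P}_3+2^{2k}\mathcal{P}_2]^{(4k)}=Q(X,\tau)$.

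For the $S$-transformation I would substitute the $\tau\mapsto-1/\tau$ laws; in each factor the Gaussian $e^{\pi\sqrt{-1}\tau x_j^2}$ cancels between numerator and denominator, each prefactor $(\tau/\sqrt{-1})^{1/2}$ cancels in the ratios, and $\theta'(0,-1/\tau)$ contributes one factor of $\tau$ per root while the argument $x_j$ is replaced by $\tau x_j$. Since $\theta_1\mapsto\theta_2$, $\theta_2\mapsto\theta_1$, $\theta_3\mapsto\theta_3$ under $S$, this yields
\[
\mathcal{P}_1(-1/\tau)=2^{2k}\left.\mathcal{P}_2(\tau)\right|_{x_j\mapsto\tau x_j},\quad \mathcal{P}_2(-1/\tau)=2^{-2k}\left.\mathcal{P}_1(\tau)\right|_{x_j\mapsto\tau x_j},\quad \mathcal{P}_3(-1/\tau)=\left.\mathcal{P}_3(\tau)\right|_{x_j\mapsto\tau x_j}.
\]
Because $[\,\cdot\,]^{(4k)}$ is homogeneous of degree $2k$ in the roots $x_j$, the substitution $x_j\mapsto\tau x_j$ becomes multiplication by $\tau^{2k}$ after extracting the top component. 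Setting $Q_i=\int_X[\mathcal{P}_i]^{(4k)}$ gives $Q_1(-1/\tau)=2^{2k}\tau^{2k}Q_2$, $Q_2(-1/\tau)=2^{-2k}\tau^{2k}Q_1$, $Q_3(-1/\tau)=\tau^{2k}Q_3$, and the coefficients $2^{2k}$ are exactly those making
\[
Q(X,-1/\tau)=\tau^{2k}\left(Q_1+2^{2k}Q_2+2^{2k}Q_3\right)=\tau^{2k}Q(X,\tau).
\]
Since $S,T$ generate $SL_2({\bf Z})$, these two identities show $Q(X,\tau)$ transforms with weight $2k$ and trivial character; it is holomorphic on ${\bf H}$, its $q$-expansion has only integer powers (forced by $T$-invariance, which negates half-integer powers) and no negative powers (the $\Theta_i$ begin at $q^0$), so it is holomorphic at the cusp. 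Thus $Q(X,\tau)$ is a modular form of weight $2k$ over $SL_2({\bf Z})$.

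The main obstacle is the first step: establishing the closed theta-product forms of the three Chern-character expressions, where the genuine content lies in matching the symmetric/exterior power generating functions of $\widetilde{T_CX}$ to the theta infinite products, with the dimension shift in $\widetilde{T_CX}$ absorbing the $q^{1/8}$ and $\prod_j(1-q^j)$ normalizations. Once these are in hand, the $S$-computation — the cancellation of the $e^{\pi\sqrt{-1}\tau x_j^2}$ factors and the emergence of precisely $\tau^{2k}$ upon extracting the top form component — is delicate but routine bookkeeping.
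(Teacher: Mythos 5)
Your proposal is correct and follows essentially the same route as the paper: both express $\widehat{A}(TX)\,\mathrm{ch}(\triangle(X))\,\mathrm{ch}(\Theta_1)$ and $2^{2k}\widehat{A}(TX)\,\mathrm{ch}(\Theta_i)$ ($i=2,3$) as products of Jacobi theta functions in the formal Chern roots and then invoke the transformation laws (2.11)--(2.15) under the generators $S$ and $T$ to obtain $Q(X,\tau+1)=Q(X,\tau)$ and $Q(X,-1/\tau)=\tau^{2k}Q(X,\tau)$. You merely spell out the bookkeeping (the permutation $\mathcal{P}_2\leftrightarrow\mathcal{P}_3$ under $T$, the exchange $\mathcal{P}_1\leftrightarrow 2^{2k}\mathcal{P}_2$ under $S$, the homogeneity argument for $x_j\mapsto\tau x_j$, and holomorphy at the cusp) that the paper leaves implicit.
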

\begin{proof} Let $\{\pm 2\pi\sqrt{-1}x_j\}~(1\leq j \leq 2k)$ be the formal Chern roots for $T_CX$, then we have
 \begin{equation}{\widehat{A}(TX,\nabla^{TX})}{\rm ch}(\triangle(X)){\rm ch}(\Theta_1(T_{C}X))=\prod_{j=1}^{2k}\frac{2 x_j\theta'(0,\tau)}{\theta(x_j,\tau)}\frac{\theta_1(x_j,\tau)}{\theta_1(0,\tau)},\end{equation}
\begin{equation}{\widehat{A}(TX,\nabla^{TX})}{\rm ch}(2^k\Theta_2(T_{C}X))=\prod_{j=1}^{2k}\frac{2 x_j\theta'(0,\tau)}{\theta(x_j,\tau)}\frac{\theta_2(x_j,\tau)}{\theta_2(0,\tau)},\end{equation}
 \begin{equation}{\widehat{A}(TX,\nabla^{TX})}{\rm ch}(2^k\Theta_3(T_{C}X))=\prod_{j=1}^{2k}\frac{2 x_j\theta'(0,\tau)}{\theta(x_j,\tau)}\frac{\theta_3(x_j,\tau)}{\theta_3(0,\tau)},\end{equation}
 So we have
\begin{equation}Q(X,\tau)=\prod_{j=1}^{2k}\left(\frac{2x_j\theta'(0,\tau)}{\theta(x_j,\tau)}
\left(\prod_{j=1}^{2k}\frac{\theta_1(x_j,\tau)}{\theta_1(0,\tau)}+\prod_{j=1}^{2k}\frac{\theta_2(x_j,\tau)}
{\theta_2(0,\tau)}+\prod_{j=1}^{2k}\frac{\theta_3(x_j,\tau)}{\theta_3(0,\tau)}
\right)\right).\end{equation}
By (2.11)-(2.15), we have $Q(X,\tau+1)=Q(X,\tau)$ and $Q(X,-\frac{1}{\tau})=\tau^{2k}Q(X,\tau)$, so
$Q(X,\tau)$ is a modular form over $SL_2({\bf Z})$ with the weight $2k$.
\end{proof}
{\bf The proof of Theorem 1.1:}
It is well known that modular forms over $SL_2({\bf Z})$ can be expressed as polynomials of the Einsentein series $E_4(\tau)$ and $E_6(\tau)$,
where
 \begin{equation}
E_4(\tau)=1+240q+2160q^2+6720q^3+\cdots,
\end{equation}
\begin{equation}
E_6(\tau)=1-504q-16632q^2-122976q^3+\cdots.
\end{equation}
Their weights are $4$ and $6$ respectively. When ${\rm dim}X=8$, then $Q(X,\tau)$ is a modular form over $SL_2({\bf Z})$ with the weight $4$. $Q(X,\tau)$ must be a multiple of
\begin{equation}
Q(X,\tau)=\lambda E_4(\tau).
\end{equation}
By (2.1)-(2.3), we have
\begin{equation}
\Theta_1(T_{C}X)=1+2q\widetilde{T_CX}+O(q^2),
\end{equation}
\begin{equation}
\Theta_2(T_{C}X)=1-q^{\frac{1}{2}}\widetilde{T_CX}+q(\widetilde{T_CX}+\wedge^2\widetilde{T_CX})+O(q^{\frac{3}{2}}),
\end{equation}
\begin{equation}
\Theta_3(T_{C}X)=1+q^{\frac{1}{2}}\widetilde{T_CX}+q(\widetilde{T_CX}+\wedge^2\widetilde{T_CX})+O(q^{\frac{3}{2}}),
\end{equation}
so
\begin{align}
&Q(X,\tau)=\left[\widehat{A}(TX){\rm ch}(\triangle(X))+32\widehat{A}(TX)\right]^{(8)}\\\notag
&+\left[2\widehat{A}(TX){\rm ch}(\triangle(X)){\rm ch}(\widetilde{T_CX})+32\widehat{A}(TX){\rm ch} (\widetilde{T_CX}+\wedge^2\widetilde{T_CX})\right]^{(8)}q\\\notag
&+\left[\widehat{A}(TX){\rm ch}(\triangle(X))
{\rm ch}(2\widetilde{T_CX}+\wedge^2\widetilde{T_CX}+\widetilde{T_CX}\otimes \widetilde{T_CX}+S^2\widetilde{T_CX})\right.\\\notag
&\left.+32\widehat{A}(TX){\rm ch}(\wedge^4\widetilde{T_CX}+\wedge^2\widetilde{T_CX}\otimes \widetilde{T_CX}
+\widetilde{T_CX}\otimes \widetilde{T_CX}+S^2\widetilde{T_CX}+
\widetilde{T_CX})\right]^{(8)}q^2+\cdots.
\end{align}
By (2.21), (2.23) and (2.27), we get Theorem 1.1. $\Box$

{\bf The proof of Theorem 1.3:}
When ${\rm dim}X=12$, then $Q(X,\tau)$ is a modular form over $SL_2({\bf Z})$ with the weight $6$, so
\begin{equation}Q(X,\tau)=\lambda E_6(\tau),
\end{equation}
where $\lambda$ are degree $6$ forms.
When ${\rm dim}X=12$, direct computations show that
\begin{align}
&Q(X,\tau)=\left[\widehat{A}(TX){\rm ch}(\triangle(X))+128\widehat{A}(TX)\right]^{(12)}\\\notag
&+\left[2\widehat{A}(TX){\rm ch}(\triangle(X)){\rm ch}(\widetilde{T_CX})+128\widehat{A}(TX){\rm ch} (\widetilde{T_CX}+\wedge^2\widetilde{T_CX})\right]^{(12)}q\\\notag
&+\left[\widehat{A}(TX){\rm ch}(\triangle(X))
{\rm ch}(2\widetilde{T_CX}+\wedge^2\widetilde{T_CX}+\widetilde{T_CX}\otimes \widetilde{T_CX}+S^2\widetilde{T_CX})\right.\\\notag
&\left.+128\widehat{A}(TX){\rm ch}(\wedge^4\widetilde{T_CX}+\wedge^2\widetilde{T_CX}\otimes \widetilde{T_CX}
+\widetilde{T_CX}\otimes \widetilde{T_CX}+S^2\widetilde{T_CX}+
\widetilde{T_CX})\right]^{(12)}q^2+\cdots.
\end{align}
In (2.28), we compare the coefficients of $1$, we get three equations about $\lambda$. By
(2.28) and (2.29) we get Theorem 1.3. $\Box$

{\bf The proof of Theorem 1.5:}
When ${\rm dim}X=16$, then $Q(X,\tau)$ is a modular form over $SL_2({\bf Z})$ with the weight $8$, so
\begin{equation}Q(X,\tau)=\lambda E_4(\tau)^2,
\end{equation}
where $\lambda$ is degree $8$ forms. By (2.21), we have
\begin{equation}
E_4(\tau)^2=1+480q+61920q^2+\cdots.
\end{equation}
When ${\rm dim}X=16$, direct computations show that

\begin{align}
&Q(X,\tau)=\left[\widehat{A}(TX){\rm ch}(\triangle(X))+512\widehat{A}(TX)\right]^{(16)}\\\notag
&+\left[2\widehat{A}(TX){\rm ch}(\triangle(X)){\rm ch}(\widetilde{T_CX})+512\widehat{A}(TX){\rm ch} (\widetilde{T_CX}+\wedge^2\widetilde{T_CX})\right]^{(16)}q\\\notag
&+\left[\widehat{A}(TX){\rm ch}(\triangle(X))
{\rm ch}(2\widetilde{T_CX}+\wedge^2\widetilde{T_CX}+\widetilde{T_CX}\otimes \widetilde{T_CX}+S^2\widetilde{T_CX})\right.\\\notag
&\left.+512\widehat{A}(TX){\rm ch}(\wedge^4\widetilde{T_CX}+\wedge^2\widetilde{T_CX}\otimes \widetilde{T_CX}
+\widetilde{T_CX}\otimes \widetilde{T_CX}+S^2\widetilde{T_CX}+
\widetilde{T_CX})\right]^{(16)}q^2+\cdots.
\end{align}
By (2.30)-(2.32), we get Theorem 1.5.$\Box$

{\bf The proof of Theorem 1.7:}
When ${\rm dim}X=20$, then $Q(X,\tau)$ is a modular form over $SL_2({\bf Z})$ with the weight $10$, so
\begin{equation}Q(X,\tau)=\lambda E_4(\tau)E_6(\tau),
\end{equation}
where $\lambda$ is degree $10$ forms. By (2.21) and (2.22), we have
\begin{equation}
E_4(\tau)E_6(\tau)=1-264q-117288q^2+\cdots.
\end{equation}
When ${\rm dim}X=20$, direct computations show that

\begin{align}
&Q(X,\tau)=\left[\widehat{A}(TX){\rm ch}(\triangle(X))+2048\widehat{A}(TX)\right]^{(20)}\\\notag
&+\left[2\widehat{A}(TX){\rm ch}(\triangle(X)){\rm ch}(\widetilde{T_CX})+2048\widehat{A}(TX){\rm ch} (\widetilde{T_CX}+\wedge^2\widetilde{T_CX})\right]^{(20)}q\\\notag
&+\left[\widehat{A}(TX){\rm ch}(\triangle(X))
{\rm ch}(2\widetilde{T_CX}+\wedge^2\widetilde{T_CX}+\widetilde{T_CX}\otimes \widetilde{T_CX}+S^2\widetilde{T_CX})\right.\\\notag
&\left.+2048\widehat{A}(TX){\rm ch}(\wedge^4\widetilde{T_CX}+\wedge^2\widetilde{T_CX}\otimes \widetilde{T_CX}
+\widetilde{T_CX}\otimes \widetilde{T_CX}+S^2\widetilde{T_CX}+
\widetilde{T_CX})\right]^{(20)}q^2+\cdots.
\end{align}
By (2.33)-(2.35), we get Theorem 1.7.$\Box$

Let ${\rm dim}X=4k$. Let $V$ be an $2m_0$ dimensional real Euclidean vector bundle with the Euclidean connection $\nabla^V$ and the curvature $R^V$.  Let $\{\pm 2\pi\sqrt{-1}u_r\}~(1\leq r \leq m_0)$ be the formal Chern roots for $V\otimes C$.
Let
\begin{align}
&Q(X,V,\tau)=\left\{\widehat{A}(TX){\rm det}^{\frac{1}{2}}{\rm cosh}(\frac{\sqrt{-1}}{4 \pi}R^V){\rm ch}\left[\bigotimes _{n=1}^{\infty}S_{q^n}(\widetilde{T_CM})\right.\right.\\\notag
&\left.\left.\otimes
\bigotimes _{m=1}^{\infty}\wedge_{q^m}(\widetilde{V_C})
\otimes \bigotimes _{r=1}^{\infty}\wedge
_{q^{r-\frac{1}{2}}}(\widetilde{V_C}))\otimes \bigotimes
_{s=1}^{\infty}\wedge _{-q^{s-\frac{1}{2}}}(\widetilde{V_C})\right]\right\}^{(4k)}.
\end{align}
Then
\begin{equation}Q(X,V,\tau)=\left(\prod_{j=1}^{2k}\frac{x_j\theta'(0,\tau)}{\theta(x_j,\tau)}
\prod_{r=1}^{m_0}\left(\frac{\theta_1(u_r,\tau)}{\theta_1(0,\tau)}\frac{\theta_2(u_r,\tau)}{\theta_2(0,\tau)}
\frac{\theta_3(u_r,\tau)}{\theta_3(0,\tau)}
\right)\right)^{(4k)}.\end{equation}
By (2.11)-(2.15), we have $Q(X,V,\tau+1)=Q(X,V,\tau)$ and $Q(X,V-\frac{1}{\tau})=\tau^{2k}Q(X,V,\tau)$ if $p_1(M)=3p_1(V)$, so
\begin{thm}
Let ${\rm dim}X=4k$. If $p_1(M)=3p_1(V)$, then $Q(X,V,\tau)$ is a modular form over $SL_2({\bf Z})$ with the weight $2k$.
\end{thm}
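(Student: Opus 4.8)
The plan is to follow verbatim the strategy used for $Q(X,\tau)$ in the proof of Theorem 2.2: starting from the theta-function expression (2.37), I would verify directly the two transformation laws $Q(X,V,\tau+1)=Q(X,V,\tau)$ and $Q(X,V,-\frac{1}{\tau})=\tau^{2k}Q(X,V,\tau)$. Since $S$ and $T$ generate $SL_2({\bf Z})$ and the theta functions are holomorphic on \textbf{H}, these two identities together with Definition 2.1 show that $Q(X,V,\tau)$ is a holomorphic modular form of weight $2k$ with trivial character.

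For the action of $T$ I would apply (2.11)--(2.15) factor by factor in (2.37). In each ratio $\frac{x_j\theta'(0,\tau)}{\theta(x_j,\tau)}$ and $\frac{\theta_1(u_r,\tau)}{\theta_1(0,\tau)}$ the common phase $e^{\pi\sqrt{-1}/4}$ cancels between numerator and denominator, so these factors are invariant; in the remaining factor the rules $\theta_2(v,\tau+1)=\theta_3(v,\tau)$ and $\theta_3(v,\tau+1)=\theta_2(v,\tau)$ merely interchange $\theta_2$ and $\theta_3$, and since their contribution enters symmetrically as $\frac{\theta_2(u_r,\tau)}{\theta_2(0,\tau)}\frac{\theta_3(u_r,\tau)}{\theta_3(0,\tau)}$, it is left unchanged. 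Hence $Q(X,V,\tau+1)=Q(X,V,\tau)$.

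For the action of $S$ I would substitute the second identities in (2.11)--(2.15). In every ratio the prefactors $\frac{1}{\sqrt{-1}}(\tau/\sqrt{-1})^{1/2}$ (for the $TX$ part) and $(\tau/\sqrt{-1})^{1/2}$ (for the $V$ part) cancel between numerator and value at $v=0$, the derivative rule contributes an extra $\tau$ to each $x_j$-factor, and each theta acquires the Gaussian $e^{\pi\sqrt{-1}\tau v^2}$. Collecting these, the $TX$ product yields $e^{-\pi\sqrt{-1}\tau\sum_j x_j^2}\prod_j\frac{(\tau x_j)\theta'(0,\tau)}{\theta(\tau x_j,\tau)}$, while the $V$ product yields $e^{3\pi\sqrt{-1}\tau\sum_r u_r^2}\prod_r\frac{\theta_1(\tau u_r,\tau)}{\theta_1(0,\tau)}\frac{\theta_2(\tau u_r,\tau)}{\theta_2(0,\tau)}\frac{\theta_3(\tau u_r,\tau)}{\theta_3(0,\tau)}$, the swap $\theta_1\leftrightarrow\theta_2$ again being absorbed by symmetry of the product. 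The total Gaussian factor is $e^{\pi\sqrt{-1}\tau(-\sum_j x_j^2+3\sum_r u_r^2)}$; because $\sum_j x_j^2$ and $\sum_r u_r^2$ are the same universal multiple of $p_1(TX)$ and $p_1(V)$ respectively, this exponent is a multiple of $3p_1(V)-p_1(M)$, which vanishes exactly under the hypothesis $p_1(M)=3p_1(V)$. Thus the Gaussian drops out and the integrand becomes the original one with every Chern root $x_j,u_r$ replaced by $\tau x_j,\tau u_r$.

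The last step is a pure degree count: replacing all roots by $\tau$ times themselves multiplies the component of cohomological degree $4k$ (polynomial degree $2k$ in the roots) by $\tau^{2k}$, so extracting the $(4k)$-part gives $Q(X,V,-\frac{1}{\tau})=\tau^{2k}Q(X,V,\tau)$. I expect the main obstacle to be bookkeeping rather than anything conceptual: one must track the $(\tau/\sqrt{-1})^{1/2}$ prefactors and the Gaussian exponents precisely, and in particular verify the identification of $\sum_j x_j^2$ and $\sum_r u_r^2$ with $p_1(TX)$ and $p_1(V)$ under a common normalization, since it is exactly this that lets the hypothesis $p_1(M)=3p_1(V)$ kill the Gaussian and yield genuine modularity rather than a Jacobi-form transformation law.
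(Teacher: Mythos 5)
Your proposal is correct and is essentially the paper's own argument: the paper simply asserts that (2.11)--(2.15) yield $Q(X,V,\tau+1)=Q(X,V,\tau)$ and $Q(X,V,-\frac{1}{\tau})=\tau^{2k}Q(X,V,\tau)$ under $p_1(M)=3p_1(V)$, and you have filled in exactly that computation, including the cancellation of the Gaussian factor $e^{\pi\sqrt{-1}\tau(3\sum_r u_r^2-\sum_j x_j^2)}$ via the anomaly condition and the final degree count. No substantive difference in approach.
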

Direct computations show
\begin{align}
Q(X,V,\tau)=
&\left\{\widehat{A}(TX){\rm det}^{\frac{1}{2}}{\rm cosh}(\frac{\sqrt{-1}}{4 \pi}R^V)\right\}^{(4k)}
+q\left\{\widehat{A}(TX){\rm det}^{\frac{1}{2}}{\rm cosh}(\frac{\sqrt{-1}}{4 \pi}R^V)\right.\\\notag
&\left.\cdot{\rm ch}(\widetilde{TX}+2\wedge^2\widetilde{V_C}-\widetilde{V_C}\otimes \widetilde{V_C}
+\widetilde{V_C})\right\}^{(4k)}+q^2\left\{\widehat{A}(TX){\rm det}^{\frac{1}{2}}{\rm cosh}(\frac{\sqrt{-1}}{4 \pi}R^V)\right.\\\notag
&\cdot{\rm ch}(S^2\widetilde{TX}+\widetilde{TX}+
(2\wedge^2\widetilde{V_C}-\widetilde{V_C}\otimes \widetilde{V_C}
+\widetilde{V_C})\otimes\widetilde{TX}
+\wedge^2\widetilde{V_C}\otimes\wedge^2\widetilde{V_C}\\\notag
&\left.+2\wedge^4\widetilde{V_C}-2\widetilde{V_C}\otimes \wedge^3\widetilde{V_C}+
2\widetilde{V_C}\otimes \wedge^2\widetilde{V_C}-\widetilde{V_C}\otimes \widetilde{V_C}\otimes \widetilde{V_C}
+\widetilde{V_C}+ \wedge^2\widetilde{V_C}
\right\}^{(4k)}+\cdots.\notag
\end{align}
When $dim X=8$, then $Q(X,V,\tau)$ is a modular form over $SL_2({\bf Z})$ with the weight $4$ and
$Q(X,V,\tau)=\lambda E_4(\tau)$. We get Theorem 1.9. When $dim X=12$, then $Q(X,V,\tau)$ is a modular form over $SL_2({\bf Z})$ with the weight $6$ and
$Q(X,V,\tau)=\lambda E_6(\tau)$. We get Theorem 1.12. When $dim X=16$, then $Q(X,V,\tau)$ is a modular form over $SL_2({\bf Z})$ with the weight $8$ and
$Q(X,V,\tau)=\lambda E_4(\tau)^2$. We get Theorem 1.15. When $dim X=20$, then $Q(X,V,\tau)$ is a modular form over $SL_2({\bf Z})$ with the weight $10$ and
$Q(X,V,\tau)=\lambda E_4(\tau)E_6(\tau)$ and $E_4(\tau)E_6(\tau)=1-264q+\cdots$. We get Theorem 1.18.\\

Let $X$ be closed oriented spinc-manifold and $L$ be the complex line bundle associated to the given spinc structure on $X$. We also
consider $L$ as a real vector bundle denoted by $L_R$. Denote by $c=c_1(L)$ the first Chern class of $L$. Let ${\rm dim}X=4k+2$ and $u=-\frac{\sqrt{-1}}{2 \pi}c$.
Let
\begin{align}
&Q(X,L,\tau)=\left\{\widehat{A}(TX){\rm exp}(\frac{c}{2}){\rm ch}\left[\bigotimes _{n=1}^{\infty}S_{q^n}(\widetilde{T_CM})
\otimes
\bigotimes _{m=1}^{\infty}\wedge_{-q^m}(\widetilde{V_C})\right]
\right\}^{(4k+2)}.
\end{align}
Then
\begin{equation}Q(X,L,\tau)=\left\{\left(\prod_{j=1}^{2k+1}\frac{x_j\theta'(0,\tau)}{\theta(x_j,\tau)}\right)
\frac{\sqrt{-1}\theta(u,\tau)}{\theta_1(0,\tau)\theta_2(0,\tau)
\theta_3(0,\tau)}
\right\}^{(4k+2)}.
\end{equation}
\begin{thm}(\cite{CHZ})
Let ${\rm dim}X=4k+2$. If $p_1(M)=p_1(L_R)$, then $Q(X,L,\tau)$ is a modular form over $SL_2({\bf Z})$ with the weight $2k$.
\end{thm}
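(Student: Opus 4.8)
The plan is to prove modularity directly from the theta-function expression (2.40), following the template of the proof of Theorem 2.2: verify the defining transformation law of Definition 2.1 on the two generators $S$ and $T$ of $SL_2({\bf Z})$, and then check holomorphicity. Write the integrand of (2.40) as a product of $2k+1$ ``tangent factors'' $F_j(\tau)=\frac{x_j\theta'(0,\tau)}{\theta(x_j,\tau)}$ and one ``line factor'' $F_L(\tau)=\frac{\sqrt{-1}\theta(u,\tau)}{\theta_1(0,\tau)\theta_2(0,\tau)\theta_3(0,\tau)}$, where $\{\pm2\pi\sqrt{-1}x_j\}$ are the formal Chern roots of $T_CX$ and $2\pi\sqrt{-1}u=c$.

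First I would treat $T:\tau\mapsto\tau+1$. By (2.11) and (2.15) the phase $e^{\pi\sqrt{-1}/4}$ produced by $\theta'(0,\tau)$ in the numerator of each $F_j$ is exactly cancelled by the identical phase of $\theta(x_j,\tau)$, so every tangent factor is $T$-invariant. For $F_L$, the phase of $\theta(u,\tau)$ cancels that of $\theta_1(0,\tau)$ via (2.11)--(2.12), while (2.13)--(2.14) merely interchange $\theta_2(0,\tau)$ and $\theta_3(0,\tau)$, leaving their product fixed; hence $F_L$ is $T$-invariant as well. Therefore $Q(X,L,\tau+1)=Q(X,L,\tau)$, which is the correct weight-$2k$ law with trivial character at $T$.

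The crux is $S:\tau\mapsto-\tfrac{1}{\tau}$. Using (2.11) and (2.15), a short computation gives $F_j(-\tfrac{1}{\tau})=e^{-\pi\sqrt{-1}\tau x_j^2}\,\frac{\tau x_j\theta'(0,\tau)}{\theta(\tau x_j,\tau)}$; that is, each tangent factor acquires $e^{-\pi\sqrt{-1}\tau x_j^2}$ and has its root rescaled $x_j\mapsto\tau x_j$, the half-integer weight $\tfrac{1}{\sqrt{-1}}(\tau/\sqrt{-1})^{1/2}$ cancelling between numerator and denominator with no leftover scalar. For the line factor, (2.11)--(2.14) at $v=0$ give $\theta_1(0,-\tfrac{1}{\tau})\theta_2(0,-\tfrac{1}{\tau})\theta_3(0,-\tfrac{1}{\tau})=(\tau/\sqrt{-1})^{3/2}\theta_1(0,\tau)\theta_2(0,\tau)\theta_3(0,\tau)$, whereas $\theta(u,-\tfrac{1}{\tau})$ carries only $(\tau/\sqrt{-1})^{1/2}$; the mismatch of these half-integer weights collapses to an explicit scalar $\tfrac{1}{\tau}$, yielding $F_L(-\tfrac{1}{\tau})=\tfrac{1}{\tau}\,e^{\pi\sqrt{-1}\tau u^2}\,\frac{\sqrt{-1}\theta(\tau u,\tau)}{\theta_1(0,\tau)\theta_2(0,\tau)\theta_3(0,\tau)}$. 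This surplus $\tfrac{1}{\tau}$ is precisely what lowers the weight from the naive $2k+1$ to $2k$.

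Collecting these, $Q(X,L,-\tfrac{1}{\tau})$ equals $\tfrac{1}{\tau}\exp\!\big(\pi\sqrt{-1}\tau(u^2-\sum_{j}x_j^2)\big)$ times the degree-$(4k+2)$ part of the original integrand with every root rescaled by $\tau$. Here the hypothesis $p_1(M)=p_1(L_R)$ enters: since $\sum_j x_j^2$ and $u^2$ are, up to the same universal constant, the first Pontryagin forms of $TX$ and of $L_R$, the relation forces $u^2-\sum_j x_j^2=0$ as a $4$-form, so the exponential anomaly factor is identically $1$. Because every $x_j$ and $u$ has form-degree $2$, rescaling all roots by $\tau$ multiplies the degree-$(4k+2)$ component by $\tau^{2k+1}$; combined with the surplus $\tfrac{1}{\tau}$ this gives $Q(X,L,-\tfrac{1}{\tau})=\tau^{2k}Q(X,L,\tau)$. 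Finally, in each ratio the $q^{1/8}$ factors of $\theta'(0,\tau),\theta(x_j,\tau),\theta(u,\tau),\theta_1(0,\tau)$ cancel and the half-integer $q$-powers in $\theta_2(0,\tau)\theta_3(0,\tau)$ combine into integer powers, so $Q(X,L,\tau)$ has a $q$-expansion in non-negative integer powers and is holomorphic on ${\bf H}$ and at the cusp. As $S$ and $T$ generate $SL_2({\bf Z})$, Definition 2.1 is verified with weight $2k$ and trivial character. The main obstacle is the bookkeeping in the $S$-step: correctly matching the half-integer $(\tau/\sqrt{-1})^{1/2}$ weights so that $F_L$ contributes the extra $\tfrac{1}{\tau}$, and confirming that the $p_1$ hypothesis annihilates the exponential factor exactly.
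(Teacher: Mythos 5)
Your verification is correct, and it is exactly the transformation-law argument the paper uses for the parallel Theorems 2.2 and 2.3 (the paper itself gives no proof of Theorem 2.4, simply citing \cite{CHZ}, where the same computation is carried out): the $T$- and $S$-laws (2.11)--(2.15) applied to (2.40), with the hypothesis $p_1(X)=p_1(L_R)$ killing the factor $e^{\pi\sqrt{-1}\tau(u^2-\sum_j x_j^2)}$ and the leftover $(\tau/\sqrt{-1})^{-1}$ from the line factor reducing the weight from $2k+1$ to $2k$. Your bookkeeping of the phases, the half-integer powers of $\tau/\sqrt{-1}$, and the homogeneity degree $2k+1$ of the $(4k+2)$-form component all check out.
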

By Theorem 2.4, similar to Theorems 1.9-1.20, we can get Theorems 1.21-1.28.\\

\section{Acknowledgements}

 The author was supported in part by NSFC No.11771070. The author is indebted to Prof. F. Han for helpful comments. The author also thank the referee for his (or her) careful reading and helpful comments.

\section{Data availability}

No data was gathered for this article.

\section{Conflict of interest}

The authors have no relevant financial or non-financial interests to disclose.

\vskip 1 true cm


\bigskip
\bigskip
\indent{Y. Wang}\\
 \indent{School of Mathematics and Statistics,
Northeast Normal University, Changchun Jilin, 130024, China }\\
\indent E-mail: {\it wangy581@nenu.edu.cn }\\
\indent{J. Guan}\\
 \indent{School of Mathematics and Statistics,
Northeast Normal University, Changchun Jilin, 130024, China }\\
\indent E-mail: {\it guanjy@nenu.edu.cn }\\

\end{document}